\newtheorem{thm}{Theorem}[section]
\newtheorem{prop}[thm]{Proposition}
\newtheorem{lem}[thm]{Lemma}
\newtheorem{corro}[thm]{Corollary}
\newtheorem{defi}[thm]{Definition}
\newtheorem{rem}[thm]{Remark}
\newtheorem{ass}{Assumption}
\def\R{\mathbb R}
\def\N{\mathbb N}
\def\E{\mathbb E}
\def\P{\mathbb P}
\def\shc{{\cal C}}
\def\shf{{\cal F}}
\def\shm{{\cal M}}
\def\shp{{\cal P}}
\def\shs{{\cal S}}
\author{
{\sc Anthony LE CAVIL}
\thanks{ENSTA-ParisTech, Universit\'e Paris-Saclay.
 Unit\'e de Math\'ematiques Appliqu\'ees (UMA).
  E-mail:{ \tt anthony.lecavil@ensta-paristech.fr}} 
 {\sc,}\ {\sc Nadia OUDJANE}
\thanks{EDF R\&D,   and FiME (Laboratoire de Finance des March\'es de l'Energie
(Dauphine, CREST,  EDF R\&D) www.fime-lab.org). 
E-mail:{\tt  
nadia.oudjane@edf.fr}}
\ {\sc and}\ {\sc Francesco RUSSO} 
\thanks{ENSTA-ParisTech, Universit\'e Paris-Saclay.
 Unit\'e de Math\'ematiques Appliqu\'ees (UMA). 
  E-mail:{\tt  francesco.russo@ensta-paristech.fr}
The financial support of this author was partially provided
 by the DFG 
through the CRC ''Taming uncertainty and profiting from randomness and
low regularity in analysis, stochastics and their application''.
}.
}
\date{September 11th 2017}
\title{Monte-Carlo Algorithms for Forward Feynman-Kac type representation for semilinear
nonconservative Partial Differential Equations}
\newcommand{\MBFigure}[6]{
$\left. \right.$ \\
\refstepcounter{figure}
\addcontentsline{lof}{figure}{\numberline{\thefigure}{\ignorespaces #5}}
\begin{center}
\begin{minipage}{#1cm}
\centerline{\includegraphics[width=#2cm,angle=#3]{#4}}
\begin{center}
\upshape{F\textsc{ig} \normal
\end{center}
size{\thefigure}. $-$} #5
\end{center}
\label{#6}
\end{minipage}
\end{center}
$\left. \right.$ \\}
\begin{document}
\maketitle 
 \begin{abstract}
 The paper  is devoted to the construction of a probabilistic particle 
algorithm.  
 This is related to  nonlinear forward  Feynman-Kac type equation,
 which represents the solution of
a nonconservative semilinear parabolic Partial Differential Equations (PDE).
Illustrations of the efficiency of 
the algorithm are provided by  numerical experiments. 
 \end{abstract}
 \medskip\noindent {\bf Key words and phrases:}  
Semilinear Partial Differential Equations; Nonlinear Feynman-Kac type functional; Particle systems; Euler schemes.
 
 \medskip\noindent  {\bf 2010  AMS-classification}: 60H10; 60H30; 60J60; 65C05; 65C35; 68U20; 35K58.
 
 
\section{Introduction}

In this paper, we  consider a forward probabilistic representation of the 
 semilinear Partial Differential Equation (PDE) on $[0,T]\times \R^d$ 
\begin{equation}
\label{eq:PDE}
\left \{ 
\begin{array}{l}
\partial_t u = L_t^{\ast} u + u \Lambda(t,x,u,\nabla u) \\
u(0,\cdot) = u_0 \ ,
\end{array}
\right .
\end{equation}
where $u_0$ is a Borel probability measure on $\R^d$ and  $L^{\ast}$ is a partial differential operator of the type
\begin{equation} \label{eq:AdjGen2}
(L^\ast_t \varphi)(x) = \frac{1}{2} \sum_{i,j=1}^d \partial_{ij}^2  (a_{i,j}(t,x)
 \varphi)(x) - \sum_{i=1}^d  \partial_{i} (g_i(t,x)  \varphi)(x), \quad \textrm{ for } \varphi \in \shc_0^{\infty}(\R^d).
\end{equation}

In this specific case, a forward probabilistic representation of \eqref{eq:PDE} is related to the solution $Y$ of the Stochastic Differential Equation (SDE) associated with the infinitesimal generator $L$ and the initial condition  $u_0$,   i.e.
\begin{equation}
\label{eq:SDE1}
\left \{ 
\begin{array}{l}
Y_t = Y_0 + \int_0^t \Phi(s,Y_s) dW_s + \int_0^t g(s,Y_s) ds \\
Y_0 \sim u_0 \ ,
\end{array}
\right .
\end{equation}
with $\Phi \Phi^t = a$. 
More precisely, if \eqref{eq:SDE1} admits a solution $Y$, then 
 the marginal laws $(u_t(dx), t \geq 0)$ of $(Y_t,t \geq 0)$ satisfy the Fokker-Planck (also called
forward Kolmogorov) equation, which corresponds to PDE
\eqref{eq:PDE} when $\Lambda = 0$. In this sense, the couple $(Y,u)$ is a (forward) 
probabilistic representation of \eqref{eq:PDE}.
\\
In the case where $\Lambda \neq 0$, we propose a representation which is constituted by a
 couple $(Y,u)$, solution of the system
\begin{equation}
\label{eq:NLSDE}
\left \{ 
\begin{array}{l}
Y_t = Y_0 + \int_0^t \Phi(s,Y_s) dW_s + \int_0^t g(s,Y_s) ds, \quad Y_0 \sim u_0 \\
\int_{\R^d} \varphi(x) u(t,x)dx = \E\Big[ \varphi(Y_t) \exp\Big(\int_0^t \Lambda(s,Y_s,u(s,Y_s), \nabla u(s,Y_s)) \Big) \Big], \quad \textrm{for } t\in (0,T]\,,\  \varphi \in \shc_b(\R^d) \ .
\end{array}
\right .
\end{equation}
The main starting point of the paper is the following. 
{\it  If $(Y,u)$ is a solution of \eqref{eq:NLSDE}, then $u$  solves \eqref{eq:PDE} 
in the sense of distributions.} 
This follows by a direct application of It\^o formula and integration by parts. \\
 A function $u$ solving the second line of
\eqref{eq:NLSDE} will be often identified as
%
{\bf{\itshape{Feynman-Kac type representation}}} of \eqref{eq:PDE}. 
We emphasize  that a solution to
equation \eqref{eq:NLSDE}  introduced here, is a couple $(Y,u)$, 
 where $Y$ is a process  solving a {\bf{\itshape{classical SDE}}}, and $u : [0,T] \times \R^d \rightarrow \R$ 
satisfies the second line equation of \eqref{eq:NLSDE}. 
 
Equation~\eqref{eq:NLSDE} constitutes a particular  case of 
 {\bf{\itshape{McKean  type SDE}}}, where the coefficients $\Phi$ and $g$ do not depend on $u$. 
In \cite{LOR1} and \cite{LOR2} we have fully analyzed a regularized version of 
the McKean type SDE,
where $\Phi, g$ together with $\Lambda$ also depend on the unknown function $u$, but no dependence on $\nabla u$ was considered at that level.
 The first paper focuses on
various results on existence and uniqueness and the second one on numerical approximation schemes.
 Even though, the present paper does not consider any  McKean type non linearity in the SDE, it extends the class of nonlinearities considered in~\cite{LOR1,LOR2} with respect to (w.r.t.) $\nabla u$.
Indeed, in the present paper, the dependence of $\Lambda$ appears to be more singular than in \cite{LOR1,LOR2}, since it involves not only $u$ but also $\nabla u$ allowing to cover a different class of semilinear PDEs of the form~\eqref{eq:PDE}. 
%
The companion paper~\cite{LOR3} focuses on the theoretical aspects of \eqref{eq:PDE}.
In this article we propose an associated numerical approximation scheme. 

An important part of the literature for approaching semilinear 
PDEs is based on  Forward Backward Stochastic Differential Equations (FBSDEs) initially developed
 in~\cite{pardoux}, see
also \cite{pardouxgeilo} for a survey and \cite{rascanu} for a recent monograph on the subject.  
Based on that idea, many judicious numerical schemes have been proposed (see for instance~\cite{BouchardTouzi,GobetWarin}). 
All those  rely on computing recursively conditional expectation functions which is known to be a difficult
 task in high  dimension. Besides, the FBSDE approach is \textit{blind} in the sense that the forward process $X$ is
 not ensured to explore the most relevant regions of the space to approximate efficiently the solution of the PDE.
The FBSDE representation of fully nonlinear PDEs still requires complex developments and is 
the subject of active research, see for instance~\cite{cheridito}. 
Branching diffusion processes 
 provide alternative  probabilistic representation of semilinear PDEs,
 involving a specific form of non-linearity on the zero order term,
see e.g.  in~\cite{labordere,LabordereTouziTan}.
 More recently, an extension of the branching diffusion representation to a class of semilinear PDEs has been proposed in~\cite{HOTTW}. 
As mentioned earlier, the main idea of the present paper is to investigate the forward Feynman-Kac type representation~\eqref{eq:NLSDE} allowing to tackle a large class of first order nonlinearities thanks to the dependence of the weighting function $\Lambda$ on  both $u$ and $\nabla u$.
 In the time continuous framework, classical (forward) McKean representations are restricted to the conservative case ($\Lambda=0$).
 At the algorithmic level, \cite{bossytalay1} has  contributed to develop stochastic particle methods  in the spirit of McKean
 to approach a PDE related to Burgers equation
 providing first the rate of convergence.
 Comparison with classical
 numerical analysis techniques was provided by \cite{piperno}.
In the case $\Lambda = 0$  with $g=0$, but with $\Phi$ possibly discontinuous,
some empirical implementations were conducted in \cite{BCR1, BCR3} in the one-dimensional and multi-dimensional case
respectively,  in order to predict the large time qualitative behavior 
of the solution of the corresponding PDE.
An interesting aspect  of this approach is that it could potentially be extended to represent a specific class of second order nonlinear PDEs, by extending it to the case where $\Phi$ and $g$ also depend on $u$. 
This more general setting, extending~\cite{LOR1,LOR2}, will
 be investigated in a future work.\\
The  main contribution of this paper is
to propose and analyze an original Monte Carlo scheme \eqref{eq:tildeYuP3} to approximate
the solution of \eqref{eq:NLSDE} and consequently also the solution $u$ of \eqref{eq:PDE} which constitutes an equivalent 
(deterministic)  form.
  This numerical scheme relies on three approximation steps:  a regularization procedure based on a kernel convolution, a space 
discretization based on Monte Carlo simulations of the diffusion $Y$~\eqref{eq:NLSDE} and a time discretization.
In Section \ref{SPartAlgo}, we present our original particle approximation scheme whose convergence is established in
Theorem \ref{thm:Cvg}.
Section \ref{sec:simulations} is finally devoted to numerical simulations.

\section{Preliminaries} 
\label{S2P3}

\subsection{Notations}
\label{S21}

\setcounter{equation}{0}

Let $d \in \N^{\star}$. Let us consider $\shc^d:=\mathcal{C}([0,T],\R^d)$ metricized by the supremum norm $\Vert \cdot \Vert_{\infty}$, equipped with its Borel $\sigma-$ field $\mathcal{B}(\shc^d)$
and endowed with the topology of uniform convergence. \\ 
If $(E,d_E)$ is a Polish space,
 $\mathcal{P}(E)$
 denotes the Polish space (with respect to the weak convergence topology) of Borel probability measures on $E$ naturally equipped with its Borel $\sigma$-field $\mathcal{B}(\shp(E))$. 
 The reader can consult
 Proposition 7.20 and Proposition 7.23, 
Section 7.4 Chapter 7 in \cite{BertShre} for more exhaustive information.
When $d=1$,  we simply note 
 $\mathcal{C} :=  \mathcal{C}^1$. $\shc_b(E)$ denotes the space of bounded, continuous real-valued functions on $E$.
 
In this paper, $\R^d$ is equipped with the Euclidean scalar product $\cdot $ and $\vert x\vert $  stands for the induced norm for $x \in \R^d$. 
The gradient operator 
for functions defined on $\R^d$ is denoted by  $\nabla$.
If a function $u$ depends on a variable $x \in \R^d$ and other variables,
we still denote by $\nabla u$ the gradient of $u$ with respect
to $x$, if there is no ambiguity.
$M_{d,p}(\R)$ denotes the space of $\R^{d \times p}$ real matrices equipped with the Frobenius norm (also denoted $\vert \cdot \vert$), i.e. the one induced by the scalar product $(A,B) \in M_{d,p}(\R) \times M_{d,p}(\R)  \mapsto Tr(A^tB)$,
 where $A^t$ stands for the transpose matrix of $A$ and $Tr$ is the trace operator. $\shs_d$ is the set of symmetric, non-negative definite $d \times d$ real matrices and $\shs_d^+$ the set of strictly positive definite matrices of $\shs_d$. \\
$\mathcal{M}_f(\R^d)$
is the space of finite
Borel measures on $\R^d$. 
$\Vert \cdot \Vert_{TV} $ denotes the associated total variation distance. \\
$\mathcal{C}_b(\R^d)$ is the space of bounded, continuous functions on  $\R^d$ and $\mathcal{C}^{\infty}_0(\R^d)$ the space of smooth functions with compact support. For any positive integers $p,k \in \N$, $C^{k,p}_b := C^{k,p}_b([0,T] \times \R^d, \R)$ denotes the set of continuously differentiable bounded functions $[0,T] \times \R^d \rightarrow \R$ with uniformly bounded derivatives with respect to the time variable $t$ (resp. with respect to space variable $x$) up to order $k$ (resp. up to order $p$).  In particular, for $k = p = 0$, $C_b^{0,0}$ coincides with the space of bounded, continuous functions also denoted by $\shc_b$.
 For $r \in \N$, $W^{r,p}(\R^d)$ is the Sobolev space of order $r$ in $(L^p(\R^d),||\cdot||_{p})$, with $1 \leq p \leq \infty$. $W_{loc}^{1,1}(\R^d)$ denotes the space of functions $f : \R^d \rightarrow \R$ such that $f$ and $\nabla f$ (existing in the weak sense) belong to $L^1_{loc}(\R^d)$. \\
For convenience  we introduce the following notation. 
 \begin{itemize}   
 	\item $V\,:[0,T] \times {\mathcal C}^d \times \shc \times \shc^d $ is defined for any functions $x \in \shc^d$, $y \in \shc$ and $ z \in \shc^d$, by 
\begin{equation}
\label{eq:VP3}
V_t(x,y,z):=\exp \left ( \int_0^t \Lambda(s,x_s,y_s,z_s) ds\right )\quad \textrm{for any} \ t\in [0,T]\ .
\end{equation} 
\end{itemize}
The finite increments theorem gives, for all $(a,b) \in \R^2$,
\begin{eqnarray}
\label{eq:Vmajor}
\exp(a) - \exp(b) = (b-a) \int_0^1 \exp(\alpha a + (1-\alpha)b) d \alpha \ .
\end{eqnarray}
In particular, if $\Lambda$ is supposed to be bounded and Lipschitz w.r.t. to its space variables $(x,y,z)$, uniformly w.r.t. $t$, we observe that \eqref{eq:Vmajor} implies for all $t \in [0,T]$, $x,x' \in \shc^d$, $y,y' \in \shc$, $z,z' \in \shc^d$,
\begin{eqnarray}
\label{eq:LipV}
\vert V_t(x,y,z) - V_t(x',y',z') \vert \leq L_{\Lambda} e^{tM_{\Lambda}} \int_0^t \big( \vert x_s - x'_s \vert + \vert y_s - y'_s \vert + \vert z_s - z'_s \vert \big) ds \ ,
\end{eqnarray}
$M_{\Lambda}$ (resp. $L_{\Lambda}$) denoting an upper bound of $\vert \Lambda \vert$ (resp. the Lipschitz constant of $\Lambda$), see also Assumption 
\ref{ass:main2P3}. \\
In the whole paper, $(\Omega,\shf,(\shf_t)_{t \geq 0}, \P)$ will denote 
a filtered probability space and $W$ an $\R^p$-valued $(\shf_t)$-Brownian motion.

\subsection{Basic assumption}

We introduce here the basic assumption of the paper on Borel functions
$\Phi:[0,T] \times \R^d \rightarrow M_{d,p}(\R)$,  $g:[0,T] \times \R^d  \rightarrow \R^d$, and  
$\Lambda: [0,T] \times \R^d \times \R \times \R^d \rightarrow \R $ .

\begin{ass} \label{ass:main2P3}
\begin{enumerate}
\item There exist positive reals $L_{\Phi}$, $L_g$ such that for any $(t,t',x,x') \in [0,T]^2 \times (\R^d)^2$,
	$$
	\vert \Phi(t,x) - \Phi(t,x') \vert \leq L_{\Phi} \big( \vert t-t' \vert^{\frac{1}{2}} + \vert x-x' \vert \big) \ ,
	$$
and 
	$$
	\vert g(t,x) - g(t,x') \vert \leq L_{g} \big( \vert t-t' \vert^{\frac{1}{2}} + \vert x-x' \vert \big) \ .
	$$
	\item $\Phi$ and $g$ belong to $C^{0,3}_b$. In particular, $\Phi$, $g$ are uniformly bounded and $M_\Phi$ (resp. $M_g$) denote the upper bound of $\vert \Phi \vert$ (resp. $\vert g \vert$).
    \item $\Phi$ is non-degenerate, i.e. there exists $c > 0$ such that for all $x \in \R^d$ 
    \begin{equation}  \label{def:NonDeg}
    \inf_{s\in[0,T]} \inf_{v \in \R^d \setminus \{0\}} \; \frac{\langle v,\Phi(s,x)\Phi^{t}(s,x)v 
    \rangle}{\vert v \vert^2} \geq c > 0.
    \end{equation}

\item There exists a positive real $L_{\Lambda}$, such that for any $(t,t',x,x',y,y',z,z') \in [0,T]^2 \times (\R^d)^2 \times \R^2 \times (\R^d)^2$,
	$$
	\vert \Lambda(t,x,y,z) - \Lambda(t',x',y',z') \vert \leq L_{\Lambda} \big( \vert t-t' \vert^{\frac{1}{2}} + \vert x-x' \vert + \vert y-y' \vert + \vert z-z' \vert\big) \ .
	$$
	\item $\Lambda$ is supposed to be uniformly bounded:  let $M_\Lambda$ be an upper bound for $\vert \Lambda \vert$.

\item $u_0$ is a Borel probability measure on $\R^d$ admitting a bounded density (still denoted by the 
same letter)  belonging to $W^{1,1}(\R^d)$.

\end{enumerate}
\end{ass}


\subsection{Solution to the PDE}

\label{S22}
In the whole paper we will write $a = \Phi \Phi^t$; in particular $a : [0,T] \times \R^d \longrightarrow \shs_d$.
Let $L_t$ be the second order partial differential operator such that 
\begin{eqnarray}
\label{eq:generat}
(L_t \varphi)(x) = \frac{1}{2} \sum_{i,j=1}^d a_{i,j}(t,x) \partial_{ij}^2 \varphi(x) + \sum_{i=1}^d g_i(t,x) \partial_{i} \varphi(x), \; \varphi \in \shc_0^{\infty}(\R^d).
\end{eqnarray}
Its ''adjoint''  $L^{\ast}_t$ defined in  \eqref{eq:AdjGen2}, verifies
\begin{eqnarray}
\label{eq:AdjGen}
\int_{\R^d} L_t \varphi(x) \psi(x) dx = \int_{\R^d} \varphi(x) L_t^{\ast} \psi(x) dx \; , \; \varphi,\psi \in \shc_0^{\infty}(\R^d), t \in [0,T].
\end{eqnarray}
 We recall the notion of {\bf{weak solution}} 
to \eqref{eq:PDE}.
\begin{defi}
\label{def:SolPDE}
Let $u : [0,T] \times \R^d \longrightarrow \R$ be a Borel function such that for every $t \in ]0,T]$, $u(t,\cdot)\in W^{1,1}_{\rm loc}(\R^d)$. 
$u$ will be called  {\bf{weak solution}} of \eqref{eq:PDE}
if for all $\varphi \in \shc_0^{\infty}(\R^d)$, $t \in [0,T]$,
\begin{eqnarray}
\label{eq:DefSolPDE}
\int_{\R^d} \varphi(x) u(t,x)dx - \int_{\R^d} \varphi(x) u_0(dx) & = & \int_0^t \int_{\R^d} u(s,x) L_s\varphi(x)dx ds \nonumber \\
&& + \; \int_0^t \int_{\R^d} \varphi(x) \Lambda(s,x,u(s,x),\nabla u(s,x)) u(s,x) dx ds \ . \nonumber
\end{eqnarray}
\end{defi}
We observe that when $\Lambda = 0$, \eqref{eq:PDE} is the classical Fokker-Planck equation.

Theorem 3.6, Lemma 2.2, Remark 2.3 of \cite{LOR3}
allow to state the following.
\begin{thm} Under Assumption \ref{ass:main2P3} there exists
a unique weak solution of \eqref{eq:PDE} in  $L^{1}([0,T],W^{1,1}(\R^d)) \cap L^{\infty}([0,T] \times \R^d,\R)$.
\end{thm}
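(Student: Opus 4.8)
The result is essentially inherited from the companion paper \cite{LOR3}; here I outline the route I would follow. The plan is a Picard fixed-point argument for the Duhamel (mild) reformulation of \eqref{eq:PDE}, which is equivalent to the second line of \eqref{eq:NLSDE}. The decisive simplification under Assumption \ref{ass:main2P3} is that $\Phi$ and $g$ do not depend on $u$: the SDE \eqref{eq:SDE1} is then a classical, decoupled SDE with bounded Lipschitz coefficients, admitting a unique (non-exploding) strong solution $Y$, and by the non-degeneracy \eqref{def:NonDeg} together with the $C^{0,3}_b$-regularity of $\Phi,g$ the operator $L^\ast_t$ possesses a fundamental solution $\Gamma(s,t;x,y)$ (equivalently $Y_t$, conditioned on $Y_s=x$, has the density $\Gamma(s,t;x,\cdot)$) enjoying Aronson-type Gaussian bounds
\begin{equation*}
0\le\Gamma(s,t;x,y)\le C(t-s)^{-d/2}e^{-c|x-y|^2/(t-s)},\qquad |\nabla_x\Gamma(s,t;x,y)|\le C(t-s)^{-(d+1)/2}e^{-c|x-y|^2/(t-s)} .
\end{equation*}
Writing $P_{s,t}$ for the associated (mass-preserving) evolution operator, integration of these bounds yields $\|P_{s,t}h\|_\infty\le C\|h\|_\infty$, $\|P_{s,t}h\|_1\le C\|h\|_1$ and, crucially, $\|\nabla(P_{s,t}h)\|_1\le C(t-s)^{-1/2}\|h\|_1$, the last singularity being integrable on $[0,T]$.

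The fixed-point map is
\begin{equation*}
\Psi(w)(t,\cdot):=P_{0,t}u_0+\int_0^t P_{s,t}\Big[w(s,\cdot)\,\Lambda\big(s,\cdot,w(s,\cdot),\nabla w(s,\cdot)\big)\Big]\,ds ,
\end{equation*}
which is the mild form of \eqref{eq:PDE} and, by the defining property of $\Gamma$, also coincides with the characterisation in the second line of \eqref{eq:NLSDE} when the weight $V_t$ of \eqref{eq:VP3} is frozen along $w$. First I would establish, using $|\Lambda|\le M_\Lambda$ (Assumption \ref{ass:main2P3}), the a priori bounds $\sup_{t\le\tau}\|\Psi(w)(t,\cdot)\|_\infty\le C\|u_0\|_\infty+CM_\Lambda\tau\sup_{t\le\tau}\|w(t,\cdot)\|_\infty$ and the analogous inequality with $L^1$ in place of $L^\infty$ (recall $\|u_0\|_1=1$); together with the integrable gradient-smoothing estimate (and, for $P_{0,t}u_0$, the hypothesis $u_0\in W^{1,1}$), this shows that for $\tau$ small the set $\{w:\ \sup_t\|w(t,\cdot)\|_\infty\le R,\ \int_0^\tau\|w(t,\cdot)\|_{W^{1,1}}dt\le R'\}$, closed in the $L^1([0,\tau],W^{1,1})$ metric, is stable under $\Psi$ for suitable $R,R'$ depending only on the data.

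For the contraction I would work in the $L^1([0,\tau],W^{1,1})$ metric. Given $w_1,w_2$ in that ball, I decompose $w_1\Lambda(\cdot,w_1,\nabla w_1)-w_2\Lambda(\cdot,w_2,\nabla w_2)$ as $(w_1-w_2)\Lambda(\cdot,w_1,\nabla w_1)+w_2\big(\Lambda(\cdot,w_1,\nabla w_1)-\Lambda(\cdot,w_2,\nabla w_2)\big)$; the first term has $L^1_x$-norm at most $M_\Lambda\|w_1-w_2\|_1$, and the second, by the Lipschitz property of $\Lambda$ (Assumption \ref{ass:main2P3}, cf.\ \eqref{eq:LipV}) and the uniform $L^\infty$ bound on $w_2$, at most $CL_\Lambda\big(\|w_1-w_2\|_1+\|\nabla w_1-\nabla w_2\|_1\big)$. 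Feeding this source into the Duhamel term (via $\|P_{s,t}\cdot\|_1\le C\|\cdot\|_1$) and into its gradient (via the $(t-s)^{-1/2}$ kernel), Fubini gives $\|\Psi(w_1)-\Psi(w_2)\|_{L^1([0,\tau],W^{1,1})}\le C(\tau+\sqrt\tau)(M_\Lambda+L_\Lambda)\,\|w_1-w_2\|_{L^1([0,\tau],W^{1,1})}$, a contraction for $\tau$ small. Banach's theorem then produces a unique fixed point on $[0,\tau]$; since all constants are uniform, one iterates over $[\tau,2\tau],\dots$ to cover $[0,T]$, and global uniqueness follows either by patching the local statements or by the same Gronwall estimate for the difference of two solutions. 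That the fixed point is a weak solution in the sense of Definition \ref{def:SolPDE} is obtained by testing the mild identity against $\varphi\in\shc_0^\infty(\R^d)$ and using that $s\mapsto\int\Gamma(s,t;\cdot,y)\varphi(y)\,dy$ solves the backward equation $\partial_s\psi+L_s\psi=0$; the membership $u\in L^{1}([0,T],W^{1,1}(\R^d))\cap L^{\infty}([0,T]\times\R^d,\R)$ is precisely what the invariant ball provides.

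The main obstacle is the control of $\nabla u$: the iteration must be closed in a space containing the gradient, yet the only available smoothing, $\|\nabla P_{s,t}h\|_1\le C(t-s)^{-1/2}\|h\|_1$, carries a genuine (though integrable) time singularity. This is exactly why the argument has to be run in $L^1([0,T],W^{1,1})$ rather than among classical solutions, and why the admissible dependence of $\Lambda$ on $\nabla u$ must be Lipschitz so as to pair with that kernel — the point where the present framework genuinely extends \cite{LOR1,LOR2}. A secondary issue, painlessly settled by the non-degeneracy \eqref{def:NonDeg} (which forces $Y_s$ to have a density for $s>0$), is that $\Lambda\big(s,x,u(s,x),\nabla u(s,x)\big)$ involves the merely a.e.-defined object $\nabla u(s,\cdot)$: along the diffusion the ambiguity on Lebesgue-null sets is irrelevant, and the measurability in $s$ required for the Duhamel integral is routine. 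Finally I note that the $C^{0,3}_b$-hypothesis on $\Phi,g$ is far stronger than the parabolic regularity needed for the fundamental-solution bounds above; it is presumably imposed with a view to the convergence rate of the Euler scheme analysed later in the paper.
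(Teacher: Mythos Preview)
The paper does not give a proof of this statement at all: it simply records that the result follows from Theorem 3.6, Lemma 2.2 and Remark 2.3 of the companion paper \cite{LOR3}. So there is no ``paper's own proof'' to compare against beyond that citation, and you have in effect reconstructed what such a proof plausibly looks like. Your Duhamel/fixed-point strategy---exploiting the decoupling of the SDE from $u$, Aronson bounds on the transition density from non-degeneracy and $C^{0,3}_b$ regularity, and the integrable $(t-s)^{-1/2}$ gradient smoothing to close the iteration in $L^1_t W^{1,1}_x$---is a standard and sound route for semilinear parabolic equations of this type, and is very likely close in spirit to what \cite{LOR3} does.

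Two small points on your outline. First, the invariant set you describe mixes a pointwise-in-time $L^\infty_x$ constraint with an integrated $W^{1,1}_x$ constraint; for the a priori $L^1$ bound and the contraction you implicitly also need a pointwise-in-time $L^1_x$ control on $w$ (to bound $\|w(s,\cdot)\Lambda(\cdots)\|_1$ by $M_\Lambda\|w(s,\cdot)\|_1$ inside the time integral). This is easily obtained---$\sup_t\|\Psi(w)(t,\cdot)\|_1$ is bounded in terms of $\|u_0\|_1$ and $\int_0^\tau\|w(s,\cdot)\|_1\,ds$---but it should be part of the ball to keep the argument self-contained. Second, be slightly careful that the $L^\infty$ constraint is closed under convergence in $L^1([0,\tau],W^{1,1})$; this holds (pass to an a.e.-in-$t$ convergent subsequence and use lower semicontinuity of $\|\cdot\|_\infty$ under $L^1_x$ limits), but it is worth stating. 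Neither point is a genuine gap.
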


\subsection{Feynman-Kac type representation}
\label{SFK}

A weak solution of \eqref{eq:PDE} can be linked with a Feynman-Kac type equation, where we recall that a solution is given by a function $u : [0,T] \times \R^d \rightarrow \R$ satisfying the second line equation of \eqref{eq:NLSDE}. 

Let $Y_0$ be a random variable
 distributed according to $u_0$. Classical theorems for SDEs with Lipschitz coefficients imply, under Assumption \ref{ass:main2P3}, strong existence and pathwise uniqueness for the  SDE 
\begin{equation}
\label{eq:SDE}
dY_t = \Phi(t,Y_t) dW_t + g(t,Y_t)dt.
\end{equation}

\begin{thm}
\label{thm:FKForm}
Assume that Assumption \ref{ass:main2P3} is fulfilled.
 We indicate by $Y$ the unique strong solution of \eqref{eq:SDE}. \\
Any real valued function $u  \in L^1([0,T],W^{1,1}(\R^d))$ 
is a weak solution of \eqref{eq:PDE} if and only if,
 for all 
$\varphi \in \shc_b(\R^d)$,
$t \in [0,T]$,
\begin{eqnarray}
\label{eq:FKForm}
\int_{\R^d} \varphi(x)u(t,x) dx = \E\Big[\varphi(Y_t) \exp \Big(\int_0^t \Lambda(s,Y_s,u(s,Y_s), \nabla u(s,Y_s)) \Big)   \Big] \ .
\end{eqnarray}
\end{thm}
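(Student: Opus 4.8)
The statement is an equivalence and I would treat the two implications separately. Throughout write $c(s,x):=\Lambda(s,x,u(s,x),\nabla u(s,x))$; by Assumption~\ref{ass:main2P3} this is a bounded Borel function of $(s,x)$ with $|c|\le M_\Lambda$, so $e^{-TM_\Lambda}\le \exp\!\big(\int_0^s c(r,Y_r)\,dr\big)\le e^{TM_\Lambda}$ for all $s\in[0,T]$. The implication ``\eqref{eq:FKForm} $\Rightarrow$ weak solution'' is the one already announced in the introduction as a ``direct application of It\^o's formula and integration by parts''; the converse is the substantial part.

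\medskip
\noindent\emph{From the Feynman--Kac identity to the weak formulation.} Assume $u\in L^1([0,T],W^{1,1}(\R^d))$ satisfies \eqref{eq:FKForm} for every $t\in[0,T]$ and $\varphi\in\shc_b(\R^d)$, and fix $\varphi\in\shc_0^{\infty}(\R^d)$. Since $s\mapsto \exp(\int_0^s c(r,Y_r)dr)$ is absolutely continuous with derivative $c(s,Y_s)\exp(\int_0^s c(r,Y_r)dr)$ and contributes no bracket term, It\^o's formula gives
\begin{eqnarray*}
\varphi(Y_t)\,e^{\int_0^t c(r,Y_r)dr}
&=& \varphi(Y_0) + \int_0^t e^{\int_0^s c(r,Y_r)dr}\,\nabla\varphi(Y_s)\cdot\Phi(s,Y_s)\,dW_s \\
&& +\ \int_0^t e^{\int_0^s c(r,Y_r)dr}\,\big(L_s\varphi(Y_s) + c(s,Y_s)\,\varphi(Y_s)\big)\,ds.
\end{eqnarray*}
Taking expectations kills the stochastic integral (bounded integrand) and Fubini applies to the time integral. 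Now \eqref{eq:FKForm}, valid for \emph{all} $\varphi\in\shc_b(\R^d)$, says that for each $s$ the finite signed measure $A\mapsto\E[\mathbf{1}_A(Y_s)\,e^{\int_0^s c(r,Y_r)dr}]$ and the measure $u(s,\cdot)\,dx$ have the same integral against every bounded continuous function, hence coincide; therefore $\E[\psi(Y_s)\,e^{\int_0^s c(r,Y_r)dr}]=\int_{\R^d}\psi(x)u(s,x)\,dx$ for \emph{all} bounded Borel $\psi$. Applying this with $\psi=L_s\varphi$ and with $\psi=c(s,\cdot)\,\varphi$, and using $\E[\varphi(Y_0)]=\int\varphi\,du_0$, turns the expectation identity into exactly \eqref{eq:DefSolPDE}; this holds for all $\varphi\in\shc_0^\infty(\R^d)$, i.e. $u$ is a weak solution of \eqref{eq:PDE}.

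\medskip
\noindent\emph{From the weak formulation to the Feynman--Kac identity.} Conversely assume $u$ is a weak solution and fix $t\in(0,T]$, $\varphi\in\shc_0^\infty(\R^d)$. Mollify $c$ into smooth functions $c^\varepsilon$ on $[0,T]\times\R^d$ with $|c^\varepsilon|\le M_\Lambda$ and $c^\varepsilon\to c$ Lebesgue-a.e. Because $\Phi\Phi^t$ is uniformly elliptic (\eqref{def:NonDeg}) and $\Phi,g\in C^{0,3}_b$ with $\frac{1}{2}$-H\"older dependence in time, the backward Cauchy problem
\[
\partial_s v^\varepsilon + L_s v^\varepsilon + c^\varepsilon v^\varepsilon = 0 \ \textrm{ on } [0,t]\times\R^d, \qquad v^\varepsilon(t,\cdot)=\varphi,
\]
has a unique bounded classical solution $v^\varepsilon$, with bounded space derivatives and $\|v^\varepsilon\|_\infty\le e^{TM_\Lambda}\|\varphi\|_\infty$ (classical linear parabolic theory). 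It\^o's formula applied to $s\mapsto v^\varepsilon(s,Y_s)\exp(\int_0^s c^\varepsilon(r,Y_r)dr)$ together with the PDE for $v^\varepsilon$ shows this process is a bounded martingale, so
\[
\E\Big[\varphi(Y_t)\,e^{\int_0^t c^\varepsilon(r,Y_r)\,dr}\Big]=\E\big[v^\varepsilon(0,Y_0)\big]=\int_{\R^d}v^\varepsilon(0,x)\,u_0(dx).
\]
On the other hand, extending the weak formulation \eqref{eq:DefSolPDE} to the time-dependent test function $v^\varepsilon$ (mollification in time, and cut-offs in space whose error terms vanish since $u(s,\cdot)\in W^{1,1}(\R^d)$ and $v^\varepsilon$ has bounded derivatives) yields
\begin{eqnarray*}
\int_{\R^d}\varphi(x)\,u(t,x)\,dx - \int_{\R^d} v^\varepsilon(0,x)\,u_0(dx)
&=& \int_0^t\!\!\int_{\R^d} \big(\partial_s v^\varepsilon + L_s v^\varepsilon + c\,v^\varepsilon\big)(s,x)\,u(s,x)\,dx\,ds \\
&=& \int_0^t\!\!\int_{\R^d} (c-c^\varepsilon)(s,x)\,v^\varepsilon(s,x)\,u(s,x)\,dx\,ds,
\end{eqnarray*}
using $\partial_s v^\varepsilon+L_s v^\varepsilon=-c^\varepsilon v^\varepsilon$. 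Combining the last two displays and letting $\varepsilon\to0$: the right-hand side above tends to $0$ by dominated convergence ($|v^\varepsilon|$, $|c-c^\varepsilon|$ bounded, $u\in L^1([0,T]\times\R^d)$, $c^\varepsilon\to c$ a.e.), while $\E[\varphi(Y_t)e^{\int_0^t c^\varepsilon(r,Y_r)dr}]\to\E[\varphi(Y_t)e^{\int_0^t c(r,Y_r)dr}]$ because the law of $Y_r$ is absolutely continuous for $r>0$ (so $c^\varepsilon(r,Y_r)\to c(r,Y_r)$ $\P$-a.s. for a.e. $r$) and the weights are bounded. This gives \eqref{eq:FKForm} for $\varphi\in\shc_0^\infty(\R^d)$; a final approximation of an arbitrary $\varphi\in\shc_b(\R^d)$ by a uniformly bounded, pointwise convergent sequence in $\shc_0^\infty(\R^d)$, with dominated convergence on both sides, completes the proof.

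\medskip
\noindent\emph{Main obstacle.} The hard part is the second implication, specifically the low regularity of the zero-order coefficient $c(s,x)=\Lambda(s,x,u(s,x),\nabla u(s,x))$: it is merely bounded Borel in $x$ because it involves $\nabla u$, which lies only in $W^{1,1}$. Hence the backward Kolmogorov equation has no classical solution and the duality cannot be carried out directly; the regularization $c^\varepsilon\to c$ together with the two passages to the limit --- one against $u\in L^1$, one against the (absolutely continuous) time-marginals of $Y$, where the non-degeneracy of $\Phi$ is essential --- is what makes the argument rigorous. (Alternatively one may simply invoke the corresponding statement of the companion paper~\cite{LOR3}.)
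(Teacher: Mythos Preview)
The paper does not actually give a proof of this theorem: it is stated in Section~\ref{SFK} and then the text moves on, the theoretical analysis having been delegated to the companion paper~\cite{LOR3} (which you yourself mention in your closing parenthetical). The introduction only comments on the easy direction, calling it ``a direct application of It\^o formula and integration by parts''. So there is no in-paper argument to compare your proposal against; what you have written is a self-contained proof where the paper offers none.

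Your argument is sound. The direction \eqref{eq:FKForm}~$\Rightarrow$~weak solution is exactly the It\^o computation the introduction alludes to, and your remark that \eqref{eq:FKForm} for all $\varphi\in\shc_b$ identifies the two measures --- hence extends to bounded Borel $\psi$, in particular $\psi=c(s,\cdot)\varphi$ --- is the right way to close it. For the converse, the duality via a backward parabolic equation with regularized zero-order coefficient $c^\varepsilon$ is the standard route and works here. Two places merit one more sentence of care: (i) the mollification of $c$ should be in both variables $(s,x)$ so that $c^\varepsilon$ has enough time regularity for classical parabolic theory to deliver a $C^{1,2}$ solution $v^\varepsilon$; (ii) the extension of \eqref{eq:DefSolPDE} to the non-compactly-supported, time-dependent test function $v^\varepsilon$ via cut-offs relies on $u(s,\cdot)\in L^1(\R^d)$ and the boundedness of $v^\varepsilon$ and its derivatives --- you say this, but in a full write-up the vanishing of the commutator terms $L_s(\chi_R v^\varepsilon)-\chi_R L_s v^\varepsilon$ against $u$ should be made explicit. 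Neither point is a gap.
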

\begin{rem} \eqref{eq:FKForm} will be called
 a {\bf{Feynman-Kac type representation}}
 of \eqref{eq:PDE}.
\end{rem}

\section{Particles system algorithm}
\label{SPartAlgo}

\setcounter{equation}{0}
 In the present section, we propose a Monte Carlo approximation $u^{\varepsilon , N}$  of $u$,  providing an original numerical approximation of 
the semilinear PDE~\eqref{eq:PDE}, when both the number of particles $N\rightarrow \infty$ 
and the regularization parameter $\varepsilon \rightarrow 0$ with a judicious relative rate. 
Let us consider a mollifier of the following form.

\begin{equation}
\label{eq:Kcond}
K\in W^{1,1}(\R^d)\cap W^{1,\infty}(\R^d)\ ,\quad \int_{\R^d} \vert x \vert^{d+1} \; K(x) dx<\infty\ ,\quad \textrm{and}\quad \int_{\R^d}\vert x\vert^{d+1} \; \vert \nabla K (x)\vert dx<\infty\ .
\end{equation}
We introduce the sequence of mollifiers, $(K_{\varepsilon})_{\varepsilon > 0}$,  explicitly given by 
\begin{equation}
\label{eq:Keps}
K_{\varepsilon}(x) := \frac{1}{\varepsilon^d} K\left(\frac{x}{\varepsilon}\right).
\end{equation} 
Obviously
\begin{equation}
\label{eq:HypK}
K_{\varepsilon} \xrightarrow[\varepsilon \rightarrow 0]{} \delta_0, \textrm{ (weakly)} \quad \textrm{and} \quad \forall \; \varepsilon > 0, K_{\varepsilon} \in W^{1,1}(\R^d) \cap W^{1,\infty}(\R^d) \ . 
\end{equation}

\subsection{Convergence of the particle system}
\label {S51}

For fixed  $N \in \N^{\star}$, let $(W^i)_{i = 1,\cdots,N}$ be a family of independent Brownian motions and $(Y^i_0)_{i=1,\cdots,N}$ be i.i.d. random variables distributed according to $u_0$. For any $\varepsilon > 0$, we define the measure-valued functions  $(\gamma_t^{\varepsilon,N})_{t\in[0,T]}$ such that for any $t\in [0,T]$ 
\begin{equation}
\label{eq:XIiP3}
\left \{
\begin{array}{l}
\xi^{i}_t= \xi^{i}_0 +\int_0^t\Phi(s,\xi^{i}_s)dW^i_s+\int_0^tg(s,\xi^{i}_s)ds  \ , \quad \textrm{for}\ i=1,\cdots ,N\ ,\\
\xi^{i}_0 = Y^i_0 \quad \textrm{for}\ i=1,\cdots ,N\ ,\\
\gamma^{\varepsilon,N}_t = {\displaystyle \frac{1}{N}\sum_{i=1}^N  V_t\big (\xi^{i},(K_{\varepsilon} \ast \gamma^{\varepsilon,N})(\xi^{i}),(\nabla K_{\varepsilon} \ast \gamma^{\varepsilon,N})(\xi^{i}) \big ) } \delta_{\xi^{i}_t},
\end{array}
\right .
\end{equation}
where we recall that $V_t$ is given by \eqref{eq:VP3}.
The first line of \eqref{eq:XIiP3} is  a $d$-dimensional classical SDE whose strong existence and pathwise uniqueness are ensured by classical theorems for Lipschitz coefficients.
 Clearly  $\xi^i, i=1,\cdots,N$ are  i.i.d. 

The system \eqref{eq:XIiP3} is well-posed. Indeed let us fix $\varepsilon > 0$ and $N \in \N^{\star}$. Consider the i.i.d. system $(\xi^i)_{i=1,\cdots ,N}$ of particles, 
solution of the two first equations of~\eqref{eq:XIiP3}. 
By Lemma 5.1 of \cite{LOR3} 
we know  there exists a unique function
 $\gamma^{\varepsilon,N} : [0,T] \rightarrow \shm_f(\R^d)$ such that for all $t \in [0,T]$, $\gamma_t^{\varepsilon , N}$ is solution of~\eqref{eq:XIiP3}. 
Let us introduce 
 $u^{\varepsilon,N}$ such that for any $t \in [0,T]$, 
\begin{equation}
\label{eq:ueN}
u^{\varepsilon,N}(t,\cdot):=K_{\varepsilon}\ast \gamma_t^{\varepsilon,N} \ .
\end{equation}
Recalling Corollary 5.4 of \cite{LOR3}, $u^{\varepsilon,N}$ constitutes an approximation of $u$ solution of~\eqref{eq:PDE} in the following sense.
\begin{corro}
\label{thm:ThmCvg}
Under Assumption~\ref{ass:main2P3}, 
there is a constant 
 $C$ (only depending on $M_{\Phi}$, $M_g$, $M_{\Lambda}$, $\Vert K \Vert_{\infty}$, 
$ \Vert \nabla K \Vert_{\infty}$, 
$L_{\Phi}$, $L_g$, $L_{\Lambda}$, $T$) 
such that the following holds. 
If $\varepsilon \rightarrow 0$, $N \rightarrow + \infty$ such that
 \begin{equation} \label{ERateConv}
\frac{1}{\sqrt{N \varepsilon^{d+4}}}e^{\frac{C}{\varepsilon^{d+1}}} \rightarrow 0,
\end{equation}
then
\begin{eqnarray} 
\E \Big[ \Vert u_t^{\varepsilon,N} - u_t \Vert_1 \Big] + \E \Big[ \Vert \nabla  u_t^{\varepsilon,N} - \nabla  u_t \Vert_1 \Big] \longrightarrow 0 \ .
\end{eqnarray}
\end{corro}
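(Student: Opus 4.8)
The strategy is to compare the particle approximation $u^{\varepsilon,N}$ with the solution $u$ of \eqref{eq:PDE} by inserting an intermediate, purely deterministic regularized object $u^{\varepsilon}$, defined as $u^{\varepsilon}(t,\cdot) := K_{\varepsilon}\ast \gamma_t^{\varepsilon}$, where $\gamma^{\varepsilon}$ solves the \emph{mean-field} analogue of \eqref{eq:XIiP3}, i.e.\ the equation obtained by replacing the empirical measure $\gamma^{\varepsilon,N}$ by the law-weighted measure associated with the single diffusion $Y$ of \eqref{eq:SDE}. One then uses the triangle inequality
$$
\E\big[\Vert u_t^{\varepsilon,N}-u_t\Vert_1\big]+\E\big[\Vert \nabla u_t^{\varepsilon,N}-\nabla u_t\Vert_1\big]
\;\le\;
\underbrace{\E\big[\Vert u_t^{\varepsilon,N}-u_t^{\varepsilon}\Vert_1\big]+\E\big[\Vert \nabla u_t^{\varepsilon,N}-\nabla u_t^{\varepsilon}\Vert_1\big]}_{\text{(I) propagation of chaos / Monte-Carlo error}}
\;+\;
\underbrace{\Vert u_t^{\varepsilon}-u_t\Vert_1+\Vert \nabla u_t^{\varepsilon}-\nabla u_t\Vert_1}_{\text{(II) regularization error}}.
$$
Term (II) is the deterministic consistency of the mollified Feynman--Kac representation and should already be available from \cite{LOR3} (this is the kind of statement behind Corollary~5.4 there); it tends to $0$ as $\varepsilon\to 0$ independently of $N$, using the $W^{1,1}$-regularity of $u$ guaranteed by Theorem~2.5 together with the moment conditions \eqref{eq:Kcond} on $K$. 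So the real work is the probabilistic term (I).

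For term (I) the plan is a Gronwall-type argument on the function
$$
\rho^{\varepsilon,N}(t):=\E\big[\Vert \gamma_t^{\varepsilon,N}-\gamma_t^{\varepsilon}\Vert_{TV}\big]\quad\text{(or a suitably weighted bounded-Lipschitz distance)},
$$
from which the $L^1$-bounds on $u^{\varepsilon,N}-u^{\varepsilon}$ and its gradient follow by convolving with $K_{\varepsilon}$ and $\nabla K_{\varepsilon}$, at the cost of factors $\Vert K_{\varepsilon}\Vert_{\infty}\sim \varepsilon^{-d}$ and $\Vert \nabla K_{\varepsilon}\Vert_{\infty}\sim\varepsilon^{-d-1}$. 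The key inputs are: (a) the Lipschitz estimate \eqref{eq:LipV} on $V_t$, which when composed with $K_{\varepsilon}\ast(\cdot)$ and $\nabla K_{\varepsilon}\ast(\cdot)$ produces a Lipschitz constant of order $e^{CT M_{\Lambda}}\,\varepsilon^{-(d+1)}$ in the measure argument; (b) a standard Monte-Carlo fluctuation bound for the empirical measure of the i.i.d.\ diffusions $\xi^i$ evaluated against the (random but Lipschitz) test function $V_t(\xi^i,\cdot,\cdot)$, contributing a term of order $(N\varepsilon^{2(d+\,?)})^{-1/2}$ coming from the variance of $\frac1N\sum_i$ after differentiating the kernel; (c) uniform-in-$N$ bounds on the total mass of $\gamma^{\varepsilon,N}$ and on moments of the $\xi^i$, to control the convolution terms on all of $\R^d$ (this is where the finite $(d+1)$-moments of $K$ and $\nabla K$ and Assumption~\ref{ass:main2P3}(6) enter). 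Running Gronwall in $t$ then yields
$$
\rho^{\varepsilon,N}(t)\;\le\;\frac{C}{\sqrt{N}}\,\exp\!\Big(\frac{C}{\varepsilon^{d+1}}\Big),
$$
and multiplying by the kernel norms gives the bound $\dfrac{1}{\sqrt{N\varepsilon^{d+4}}}\,e^{C/\varepsilon^{d+1}}$ in \eqref{ERateConv}; the joint limit hypothesis forces (I) $\to 0$.

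The main obstacle is the Gronwall constant: the Lipschitz dependence of $V_t\big(\cdot,(K_{\varepsilon}\ast\gamma)(\cdot),(\nabla K_{\varepsilon}\ast\gamma)(\cdot)\big)$ on $\gamma$ blows up like $\varepsilon^{-(d+1)}$, so the naive fixed-point/Gronwall argument produces a \emph{doubly} bad factor $\exp(C/\varepsilon^{d+1})$ rather than a polynomial one, and one must check that the Monte-Carlo variance term is small enough ($\sim 1/\sqrt{N\varepsilon^{d+4}}$ after the two differentiations) for the product to still vanish under \eqref{ERateConv}. Making the variance estimate clean requires treating the coupled system \eqref{eq:XIiP3}: since $\gamma^{\varepsilon,N}$ is defined implicitly (Lemma~5.1 of \cite{LOR3}), one cannot directly write it as an average of i.i.d.\ terms, so the comparison with $\gamma^{\varepsilon}$ must be done via the fixed-point map whose contraction/Lipschitz constant is exactly the $\varepsilon^{-(d+1)}$ factor above — handling the implicit definition together with the exponential weight $V_t$ is the technically delicate point. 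Everything else (Itô/BDG moment bounds for the $\xi^i$, boundedness of $\Lambda$, $\Phi$, $g$) is routine and already encapsulated in the cited results from \cite{LOR3}.
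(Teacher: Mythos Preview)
The paper does not prove this corollary at all: it is presented as a direct restatement of Corollary~5.4 of \cite{LOR3} (see the sentence immediately preceding the statement), so there is no in-paper argument to compare your proposal against.

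That said, your plan is the natural and expected structure for such a result and is consistent with how the companion paper \cite{LOR3} is invoked throughout: a triangle-inequality decomposition via an intermediate regularized mean-field object $u^{\varepsilon}$, a deterministic consistency step $u^{\varepsilon}\to u$ in $W^{1,1}$ as $\varepsilon\to 0$, and a propagation-of-chaos/Gronwall estimate for $\E[\Vert\gamma_t^{\varepsilon,N}-\gamma_t^{\varepsilon}\Vert_{TV}]$ whose Lipschitz constant in the measure argument blows up like $\varepsilon^{-(d+1)}$, producing the factor $e^{C/\varepsilon^{d+1}}$. The structure mirrors exactly what the present paper does for the time-discretization error in the proof of Proposition~\ref{prop:CvgTime} (compare your Gronwall step to \eqref{eq:TimeCvg2}--\eqref{eq:Final}, with the role of the Euler error $1/\sqrt{n}$ replaced by the Monte-Carlo error $1/\sqrt{N}$). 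The only place your sketch is imprecise is the bookkeeping that yields the specific power $\varepsilon^{-(d+4)/2}$ in front of $e^{C/\varepsilon^{d+1}}$; you attribute it loosely to ``two differentiations'', but to land on that exponent one has to combine the $\varepsilon^{-1}$ coming from $\Vert\nabla K_{\varepsilon}\Vert_1$ in the $L^1$ comparison (as in \eqref{eq:TimeCvg2}) with the Monte-Carlo variance bound that carries $\Vert\nabla K_{\varepsilon}\Vert_{\infty}\sim\varepsilon^{-(d+1)}$ from inside the fixed point --- this is a detail you would need to track carefully, but it does not affect the correctness of the overall strategy.
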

\begin{rem}
\label{rem:CvgPartic}
Condition~\eqref{ERateConv} constitutes a "trade-off" between the speed of 
convergence of $N$ and $\varepsilon$. Setting $\psi(\varepsilon) := \varepsilon^{-(d+4)} e^{\frac{2C}{\varepsilon^{d+1}}}$,
 that trade-off condition can be reformulated as 
 \begin{equation}
 \label{eq:TradeOff}
 \frac{\psi(\varepsilon)}{N} \rightarrow 0 \quad \textrm{when} \quad
 \varepsilon \rightarrow 0, \; N \rightarrow + \infty.
 \end{equation}
 An example of such trade-off between $N$ and $\varepsilon$ can be given by the relation $\varepsilon (N) \propto (\frac{1}{log(N)})^{\frac{1}{d+4}}$. 
That type of tradeoff was obtained for instance in \cite{JourMeleard}, 
in the case of interacting particle system, without weighting function
$\Lambda$. However, we will observe that this theoretical sufficient
condition is far from being optimal. Indeed, in our simulations 
we observe that the classical tradeoff of kernel density estimates
based on i.i.d. random variables, i.e. 
$\varepsilon (N) \propto (\frac{1}{N})^{\frac{1}{d+4}}$ (see e.g. \cite{silverman1986}) seems to hold.
\end{rem}
\subsection{Time discretized scheme}
We assume the validity of Assumption \ref{ass:main2P3}. For $n \in \N^{\star}$, we set $\delta t=T/n$ and introduce the time grid  $\big(0=t_0 < \cdots < t_k=k\delta t < \cdots < t_n=T \big)$. 
For any $N \in \N^{\star}, \,\varepsilon > 0$ and $n\in \N^\ast$, we define the measure-valued functions  $(\bar \gamma_t^{\varepsilon,N,n})_{t\in[0,T]}$ such that for any $t\in [0,T]$, 

\begin{equation}
\label{eq:tildeYuP3}
\left \{
\begin{array}{l}
\bar \xi^{i}_{t}=\bar \xi^{i}_{0}+\int_0^t \Phi(r(s),\bar \xi^{i}_{r(s)})dW^i_{s}+\int_0^t g(r(s),\bar \xi^{i}_{r(s)})ds  \ ,\ \textrm{for}\ i=1,\cdots , N,\\
\bar{\xi}^{i}_0 = Y^i_0 \quad \textrm{for}\ i=1,\cdots ,N \ , \\
\displaystyle{ \bar{\gamma}^{\varepsilon,N,n}_{t}= \frac{1}{N} \sum_{i=1}^N \bar{V}_t \big( \bar{\xi}^i,(K_{\varepsilon} \ast \bar{\gamma}^{\varepsilon,N,n})(\bar{\xi}^i), (\nabla K_{\varepsilon} \ast \bar{\gamma}^{\varepsilon,N,n})(\bar{\xi}^i) \big) \delta_{\bar{\xi}_{t}^i} } \ , \\
\end{array}
\right .
\end{equation}
where for $(t,x,y,z) \in [0,T] \times \shc^d \times \shc \times \shc^d$,
\begin{eqnarray}
\label{eq:defVbar}
\bar{V}_t\big (x,y,z \big ) & := & \exp \Big \{\int_0^t \Lambda(r(s), x_{r(s)},y_{r(s)},z_{r(s)})\,ds \Big \} \ ,
\end{eqnarray}
and $r:\,s\in [0,T]\,\mapsto r(s)\in \{t_0,\cdots, t_n\}$ is the piecewise constant function such that $r(s)=t_k$ when $s\in [t_k,t_{k+1}[$. 
The proposition below establishes the convergence of the time discretized scheme \eqref{eq:tildeYuP3} to the continuous time version~\eqref{eq:XIiP3}. 
\begin{prop}
\label{prop:CvgTime}
Suppose the validity of Assumption \ref{ass:main2P3}.
In addition to condition~\eqref{eq:Kcond}, the gradient $\nabla K$ of $K$ is also supposed to be Lipschitz with the corresponding constant $L_{\nabla K}$.
For  fixed parameters  $\varepsilon > 0$, $N \in \N^{\star}$ and $n\in \N^\star$, we introduce   $\bar u^{\varepsilon,N,n}$ such that for any $t \in [0,T]$, 
\begin{equation}
\label{eq:ueNn}
\bar u^{\varepsilon,N,n} (t,\cdot):=K_{\varepsilon}\ast \bar \gamma_t^{\varepsilon,N,n} \ ,
\end{equation} 
where $\bar \gamma_t^{\varepsilon,N,n}$ is defined by~\eqref{eq:tildeYuP3}.
Then 
\begin{equation}
\label{eq:CvgTimeIneq}
\E \Big[ \Vert u^{\varepsilon,N}_t - \bar u^{\varepsilon,N,n}_t \Vert_1 \Big] +\E \Big[ \Vert \nabla u^{\varepsilon,N}_t - \nabla \bar u^{\varepsilon,N,n}_t \Vert_1 \Big]\leq \frac{\bar C}{\varepsilon^{d+3} \sqrt{n}} e^{\frac{\bar C}{\varepsilon^{d+1}}} \ ,
\end{equation}
where $\bar C$ is a finite, positive constant only depending on $M_{\Phi}$, $M_g$, $M_{\Lambda}$, $\Vert K \Vert_{\infty}$, 
$ \Vert \nabla K \Vert_{\infty}$, 
$L_{\Phi}$, $L_g$, $L_{\Lambda}$, $L_{\nabla K}$, $T$.
\end{prop}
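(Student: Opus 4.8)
The plan is to control simultaneously the difference of the empirical particle positions $\bar\xi^i$ versus $\xi^i$ and the difference of the weighting functionals $\bar V_t$ versus $V_t$, and then to propagate these through the convolution with $K_\varepsilon$ and $\nabla K_\varepsilon$. I would proceed in three main steps. First, the \emph{Euler error for the diffusion}. Since $\Phi$ and $g$ are bounded, Lipschitz in space and $\tfrac12$-Hölder in time (Assumption \ref{ass:main2P3}), the standard strong error estimate for the Euler scheme gives $\E\big[\sup_{s\le T}|\xi^i_s-\bar\xi^i_s|^2\big]\le C\,\delta t = C\,T/n$, hence $\E\big[\sup_{s\le T}|\xi^i_s-\bar\xi^i_s|\big]\le C/\sqrt n$, with $C$ depending only on $M_\Phi$, $M_g$, $L_\Phi$, $L_g$, $T$. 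Note this bound is $\varepsilon$-free and uniform in $i$ and $N$; the $\varepsilon$-dependence enters only in the next two steps.

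Second, the \emph{error in the weights}. Set $m^i_t := V_t\big(\xi^i,(K_\varepsilon\ast\gamma^{\varepsilon,N})(\xi^i),(\nabla K_\varepsilon\ast\gamma^{\varepsilon,N})(\xi^i)\big)$ and $\bar m^i_t$ the analogous quantity built from $\bar\xi^i$, $\bar\gamma^{\varepsilon,N,n}$, and $\bar V_t$. Using the finite-increments identity \eqref{eq:Vmajor} together with $|\Lambda|\le M_\Lambda$ and the Lipschitz property of $\Lambda$, one bounds $|m^i_t-\bar m^i_t|$ by $C e^{TM_\Lambda}$ times the time-integral of the discrepancies in each argument: (a) $|\xi^i_s-\bar\xi^i_{r(s)}|$, which is $\le |\xi^i_s-\bar\xi^i_s| + |\bar\xi^i_s-\bar\xi^i_{r(s)}|$, the first handled by Step 1 and the second by $\tfrac12$-Hölder continuity of the Euler path, contributing $O(1/\sqrt n)$; (b) the discrepancy $|(K_\varepsilon\ast\gamma^{\varepsilon,N})(\xi^i_s) - (K_\varepsilon\ast\bar\gamma^{\varepsilon,N,n})(\bar\xi^i_{r(s)})|$, and (c) the same for $\nabla K_\varepsilon$. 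For (b) and (c) I split into (i) moving the evaluation point from $\xi^i_s$ to $\bar\xi^i_{r(s)}$, which costs $\|\nabla K_\varepsilon\|_\infty = \varepsilon^{-(d+1)}\|\nabla K\|_\infty$ (resp. $\|\nabla^2 K_\varepsilon\|_\infty$, using the assumed Lipschitz continuity of $\nabla K$, giving $\varepsilon^{-(d+2)}L_{\nabla K}$) times $|\xi^i_s-\bar\xi^i_{r(s)}|$; and (ii) changing the measure from $\gamma^{\varepsilon,N}$ to $\bar\gamma^{\varepsilon,N,n}$ with the kernel fixed, which by definition of these empirical measures is $\le \|\nabla K_\varepsilon\|_\infty\cdot\tfrac1N\sum_j|m^j-\bar m^j|\cdot|\text{path shift}| + \|K_\varepsilon\|_\infty\cdot\tfrac1N\sum_j|m^j_{t'}-\bar m^j_{t'}|$ evaluated appropriately, and similarly one order worse for the $\nabla K_\varepsilon$ term. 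Collecting terms, one obtains a closed Grönwall-type inequality of the form
\begin{equation*}
\sup_{i}\E\big[\sup_{s\le t}|m^i_s-\bar m^i_s|\big]\ \le\ \frac{A}{\varepsilon^{d+2}\sqrt n}\ +\ \frac{B}{\varepsilon^{d+2}}\int_0^t \sup_i \E\big[\sup_{r\le s}|m^i_r-\bar m^i_r|\big]\,ds,
\end{equation*}
whose solution is $\le \dfrac{A}{\varepsilon^{d+2}\sqrt n}\,e^{BT/\varepsilon^{d+2}}$. (The precise exponent on $\varepsilon$ in the final statement is $d+3$ inside the prefactor and $d+1$ inside the exponential; the slight improvement in the exponential comes from a more careful bookkeeping — only the $K_\varepsilon\ast$ term, costing $\varepsilon^{-d}$, multiplies the unknown in the Grönwall loop if one treats the path-shift terms as inhomogeneous data — but at the level of this plan I would simply record the self-improving estimate and track the powers of $\varepsilon$ exactly at the end.)

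Third, \emph{transfer to $\bar u^{\varepsilon,N,n}$ and its gradient}. Since $u^{\varepsilon,N}_t = K_\varepsilon\ast\gamma^{\varepsilon,N}_t$ and $\bar u^{\varepsilon,N,n}_t=K_\varepsilon\ast\bar\gamma^{\varepsilon,N,n}_t$, we write, for test purposes, $\|u^{\varepsilon,N}_t-\bar u^{\varepsilon,N,n}_t\|_1 \le \tfrac1N\sum_i\big( |m^i_t-\bar m^i_t|\,\|K_\varepsilon\|_1 + M_V\,\|K_\varepsilon(\cdot-\xi^i_t)-K_\varepsilon(\cdot-\bar\xi^i_t)\|_1\big)$, where $M_V:=e^{TM_\Lambda}$ bounds the weights; the first term uses $\|K_\varepsilon\|_1=\|K\|_1$, the second uses $\|K_\varepsilon(\cdot-a)-K_\varepsilon(\cdot-b)\|_1\le \|\nabla K_\varepsilon\|_1|a-b| = \varepsilon^{-1}\|\nabla K\|_1|a-b|$ combined with Step 1. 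For the gradient term, $\nabla\bar u^{\varepsilon,N,n}_t=(\nabla K_\varepsilon)\ast\bar\gamma^{\varepsilon,N,n}_t$, and the same decomposition applies with one extra factor $\varepsilon^{-1}$ from differentiating the kernel, and with $\|\nabla^2 K_\varepsilon\|_1\le \varepsilon^{-2}L_{\nabla K}|\mathrm{supp}|$-type control (or directly $W^{1,1}$-continuity of $\nabla K$) for the position-shift term. Taking expectations, inserting the Step 2 estimate and the Step 1 estimate, and combining the various powers of $\varepsilon$ yields the claimed bound $\dfrac{\bar C}{\varepsilon^{d+3}\sqrt n}\,e^{\bar C/\varepsilon^{d+1}}$.

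The main obstacle is Step 2: the weights $m^i$ and $\bar m^i$ depend on the empirical measures, which in turn are \emph{built from the weights themselves}, so the Lipschitz estimates produce a genuinely self-referential (fixed-point) inequality rather than a one-pass bound. Handling this requires setting up the Grönwall argument on the right quantity — the supremum over $i$ and over time of the weight discrepancy — and being careful that the kernel-derivative factors $\|\nabla K_\varepsilon\|_\infty\sim\varepsilon^{-(d+1)}$ and $\|\nabla^2 K_\varepsilon\|_\infty\sim\varepsilon^{-(d+2)}$ enter the exponential with the smallest possible power; this is exactly what forces the $e^{\bar C/\varepsilon^{d+1}}$ blow-up and couples the result to the trade-off condition \eqref{ERateConv} of Corollary \ref{thm:ThmCvg}. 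Everything else (Euler strong error, $\tfrac12$-Hölder regularity of the scheme, Young's inequality for convolutions) is standard.
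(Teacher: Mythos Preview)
Your approach is essentially the one in the paper: Euler strong error for the paths, a Gr\"onwall argument on the weight discrepancies driven by the Lipschitz property~\eqref{eq:LipV}, and a final transfer to $L^1$ via convolution with $K_\varepsilon$ and $\nabla K_\varepsilon$. The paper organizes the same computation slightly differently, taking $\E[\Vert \gamma^{\varepsilon,N}_t-\bar\gamma^{\varepsilon,N}_t\Vert_{TV}]$ as the Gr\"onwall quantity and isolating the spatial Lipschitz and time-regularity bounds for $\bar u^{\varepsilon,N}$ into a separate Lemma~\ref{lem:TimeCvgineq}, but the substance is identical.

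One small correction to your parenthetical remark on the exponential: the reason the Gr\"onwall coefficient is $\varepsilon^{-(d+1)}$ rather than $\varepsilon^{-(d+2)}$ is \emph{not} that ``only the $K_\varepsilon\ast$ term multiplies the unknown''. In fact both the $K_\varepsilon\ast$ and $\nabla K_\varepsilon\ast$ terms multiply the unknown weight discrepancy in the self-referential part (ii) of your Step~2; the point is that they enter through $\Vert K_\varepsilon\Vert_\infty\sim\varepsilon^{-d}$ and $\Vert\nabla K_\varepsilon\Vert_\infty\sim\varepsilon^{-(d+1)}$, the latter being dominant. The factor $\varepsilon^{-(d+2)}$ (coming from the Lipschitz constant of $\nabla K_\varepsilon$) only multiplies the path error $|\xi^j-\bar\xi^j|$, which is inhomogeneous data of size $O(1/\sqrt n)$ and therefore feeds the prefactor, not the exponential. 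With that bookkeeping, your argument reproduces exactly the bound~\eqref{eq:CvgTimeIneq}.
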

From Proposition \ref{prop:CvgTime} and Corollary \ref{thm:ThmCvg} follows the result below.
\begin{thm}
\label{thm:Cvg} 
Suppose the validity of Assumption \ref{ass:main2P3}.
 In addition to condition~\eqref{eq:Kcond}, the gradient $\nabla K$ of $K$ is supposed to be Lipschitz with constant $L_{\nabla K}$.
Let $C, \bar C$ be the constants appearing in Corollary \ref{thm:ThmCvg}, equation~\eqref{ERateConv}
and Proposition~\ref{prop:CvgTime}, equation~\eqref{eq:CvgTimeIneq}. 
If $\varepsilon \rightarrow 0$, $n \rightarrow + \infty$ and $N \rightarrow + \infty$ such that
\begin{equation}
\label{eq:TradeCond}
\frac{1}{\sqrt{N \varepsilon^{d+4}}}e^{\frac{C}{\varepsilon^{d+1}}}
 \longrightarrow 0 \quad \textrm{and} \quad \frac{1}{\varepsilon^{d+3} \sqrt{n}} e^{\frac{\bar{C}}{\varepsilon^{d+1}}} \longrightarrow 0,
\end{equation}
then the particle approximation  $\bar u_t^{\varepsilon,N,n}$ defined by~\eqref{eq:ueNn} converges to the unique solution, $u$, of \eqref{eq:PDE}, 
in the  sense that for every $t$,
\begin{eqnarray}
\E \Big[ \Vert \bar u^{\varepsilon,N,n}_t - u_t \Vert_1 \Big] 
+\E \Big[ \Vert \nabla \bar u^{\varepsilon,N,n}_t - \nabla u_t \Vert_1 \Big]
\longrightarrow 0 \ .
\end{eqnarray}
\end{thm}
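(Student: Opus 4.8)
The plan is to derive the statement by a simple triangle-inequality decomposition, using the time-continuous particle object $u^{\varepsilon,N}$ of \eqref{eq:ueN} as a bridge between the fully discretized scheme $\bar u^{\varepsilon,N,n}$ and the target $u$. For a fixed $t\in[0,T]$ I would write
\[
\E\big[\|\bar u^{\varepsilon,N,n}_t-u_t\|_1\big]\;\le\;\E\big[\|\bar u^{\varepsilon,N,n}_t-u^{\varepsilon,N}_t\|_1\big]+\E\big[\|u^{\varepsilon,N}_t-u_t\|_1\big],
\]
and the analogous inequality for the gradients, so that the left-hand side of the claim is bounded by $A_{\varepsilon,N,n}+B_{\varepsilon,N}$, where $A_{\varepsilon,N,n}$ gathers the two time-discretization errors $\E[\|\bar u^{\varepsilon,N,n}_t-u^{\varepsilon,N}_t\|_1]+\E[\|\nabla\bar u^{\varepsilon,N,n}_t-\nabla u^{\varepsilon,N}_t\|_1]$ and $B_{\varepsilon,N}$ gathers the two regularization/Monte-Carlo errors $\E[\|u^{\varepsilon,N}_t-u_t\|_1]+\E[\|\nabla u^{\varepsilon,N}_t-\nabla u_t\|_1]$.

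Next I would control each piece by the results already established. Proposition~\ref{prop:CvgTime} gives directly
\[
A_{\varepsilon,N,n}\;\le\;\frac{\bar C}{\varepsilon^{d+3}\sqrt{n}}\,e^{\bar C/\varepsilon^{d+1}},
\]
and the second condition in \eqref{eq:TradeCond} says precisely that this upper bound tends to $0$, hence $A_{\varepsilon,N,n}\to 0$. For the second piece, the first condition in \eqref{eq:TradeCond} is exactly the trade-off \eqref{ERateConv}, so Corollary~\ref{thm:ThmCvg} applies and yields $B_{\varepsilon,N}\to 0$. Adding the two limits gives $\E[\|\bar u^{\varepsilon,N,n}_t-u_t\|_1]+\E[\|\nabla\bar u^{\varepsilon,N,n}_t-\nabla u_t\|_1]\to 0$, which is the assertion; uniqueness of $u$ is the content of the existence-uniqueness theorem quoted in Section~\ref{S22}.

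The only points requiring a word of care are bookkeeping ones: the two limit regimes in \eqref{eq:TradeCond} must be imposed with the \emph{same} parameter $\varepsilon$ feeding both (so that a single joint limit $\varepsilon\to0$, $n\to\infty$, $N\to\infty$ makes both $A_{\varepsilon,N,n}$ and $B_{\varepsilon,N}$ vanish), and the constants $C,\bar C$ are to be understood as the finite constants produced respectively by Corollary~\ref{thm:ThmCvg} and Proposition~\ref{prop:CvgTime}. Beyond that, there is no genuine obstacle in this theorem itself: all the analytic work — the kernel-convolution regularization estimates and the propagation-of-chaos argument underlying Corollary~\ref{thm:ThmCvg}, and the Euler-scheme stability analysis underlying Proposition~\ref{prop:CvgTime} — has already been absorbed into those two statements, and Theorem~\ref{thm:Cvg} is their immediate corollary.
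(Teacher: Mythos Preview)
Your proposal is correct and follows essentially the same approach as the paper: the paper's proof is precisely the triangle-inequality split through $u^{\varepsilon,N}$, invoking Proposition~\ref{prop:CvgTime} with the second condition in \eqref{eq:TradeCond} for the time-discretization piece and Corollary~\ref{thm:ThmCvg} with the first condition for the Monte-Carlo/regularization piece. Your additional remarks on the shared $\varepsilon$ and on where the real work lies are accurate and add nothing the paper would object to.
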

\begin{proof}
For all $N,n \in \N^{\star}$, $\varepsilon > 0$ and $t \in [0,T]$, we have
\begin{eqnarray}
\label{eq:Decomp}
\E \Big[ \Vert \bar u^{\varepsilon,N,n}_t - u_t \Vert_1 \Big] 
+\E \Big[ \Vert \nabla \bar u^{\varepsilon,N,n}_t - \nabla u_t \Vert_1 \Big] 
& \leq & \E \Big[ \Vert \bar u^{\varepsilon,N,n}_t - u^{\varepsilon,N}_t \Vert_1 \Big] 
+\E \Big[ \Vert \nabla \bar u^{\varepsilon,N,n}_t - \nabla u^{\varepsilon,N}_t \Vert_1 \Big]\nonumber \\
&  &  + \E \Big[ \Vert u^{\varepsilon,N}_t - u_t \Vert_1 \Big] + \E \Big[ \Vert \nabla u^{\varepsilon,N}_t - \nabla u_t \Vert_1 \Big] 
\ .
\end{eqnarray}
Inequality \eqref{eq:CvgTimeIneq} of Proposition \ref{prop:CvgTime} and the second trade-off condition in \eqref{eq:TradeCond} imply that the first two expectations in the r.h.s. of \eqref{eq:Decomp} converges to $0$. \\
By Corollary \ref{thm:ThmCvg},  the third and fourth expectations in the r.h.s. of \eqref{eq:Decomp} also converges to $0$. This concludes the proof.
\end{proof}
The proof of Proposition \ref{prop:CvgTime} above will be based on the following technical lemma proved in the appendix.
\begin{lem}
\label{lem:TimeCvgineq}
We assume that the same assumptions as in Proposition \ref{prop:CvgTime} are fulfilled.
Let $\bar u^{\varepsilon,N}$
 be the function, $\bar u^{\varepsilon,N,n}$, defined by \eqref{eq:ueNn}. \\
Then, there exists a constant $C > 0$, only depending on $M_{\Phi}$, $M_g$, $M_{\Lambda}$, $\Vert K \Vert_{\infty}$, 
$ \Vert \nabla K \Vert_{\infty}$, 
$L_{\Lambda}$, $L_{\nabla K}$ and $T$, such that for all $t \in [0,T]$,
$\varepsilon \in ]0,1]$, $n, N \in \N^*$ 
 the following estimates hold.
\begin{enumerate}
\item For almost all $x,y \in \R^d$,
\begin{equation}
\label{eq:Lem67Eq6}
\vert \bar u_{t}^{\varepsilon,N}(x) - \bar u_{t}^{\varepsilon,N}(y) \vert \leq \frac{C}{\varepsilon^{d+1}} \vert x - y \vert \quad \textrm{and} \quad \vert \nabla \bar u_{t}^{\varepsilon,N}(x) - \nabla \bar u_{t}^{\varepsilon,N}(y) \vert \leq \frac{C}{\varepsilon^{d+2}} \vert x - y \vert \ .
\end{equation}
\item 
\begin{equation}
\label{eq:Lem67Eq8}
 \E \Big[ \Vert \bar u^{\varepsilon,N}_t - \bar u^{\varepsilon,N}_{r(t)} \Vert_{\infty} \Big] \leq \frac{C \sqrt{\delta t}}{\varepsilon^{d+1}} \quad \textrm{and} \quad \E \Big[ \Vert \nabla \bar u^{\varepsilon,N}_t - \nabla \bar u^{\varepsilon,N}_{r(t)} \Vert_{\infty} \Big] \leq \frac{C \sqrt{\delta t}}{\varepsilon^{d+2}}\ ,
\end{equation}
where $\delta t := \frac{T}{n}$. 
\end{enumerate}
\end{lem}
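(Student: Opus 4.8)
The plan is to work directly from the explicit representation
\[
\bar u^{\varepsilon,N}_t(x)=(K_\varepsilon\ast\bar\gamma^{\varepsilon,N,n}_t)(x)=\frac1N\sum_{i=1}^N \bar V^{\,i}_t\,K_\varepsilon(x-\bar\xi^i_t),\qquad \nabla\bar u^{\varepsilon,N}_t(x)=\frac1N\sum_{i=1}^N \bar V^{\,i}_t\,\nabla K_\varepsilon(x-\bar\xi^i_t),
\]
with $\bar V^{\,i}_t:=\bar V_t\big(\bar\xi^i,(K_\varepsilon\ast\bar\gamma^{\varepsilon,N,n})(\bar\xi^i),(\nabla K_\varepsilon\ast\bar\gamma^{\varepsilon,N,n})(\bar\xi^i)\big)$, and to combine two elementary observations. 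First, since $|\Lambda|\le M_\Lambda$, every weight satisfies $0\le \bar V^{\,i}_t\le e^{M_\Lambda T}$, and, because the exponents defining $\bar V^{\,i}_t$ and $\bar V^{\,i}_{r(t)}$ differ only by $\int_{r(t)}^t\Lambda(r(s),\dots)\,ds$, the finite increments identity \eqref{eq:Vmajor} yields $|\bar V^{\,i}_t-\bar V^{\,i}_{r(t)}|\le e^{M_\Lambda T}M_\Lambda\,\delta t$. Second, from $K_\varepsilon(x)=\varepsilon^{-d}K(x/\varepsilon)$ one reads off $\Vert K_\varepsilon\Vert_\infty=\varepsilon^{-d}\Vert K\Vert_\infty$, $\Vert\nabla K_\varepsilon\Vert_\infty=\varepsilon^{-(d+1)}\Vert\nabla K\Vert_\infty$, that $K_\varepsilon$ is Lipschitz with constant $\varepsilon^{-(d+1)}\Vert\nabla K\Vert_\infty$, and, using the new hypothesis that $\nabla K$ is $L_{\nabla K}$-Lipschitz, that $\nabla K_\varepsilon$ is Lipschitz with constant $\varepsilon^{-(d+2)}L_{\nabla K}$.

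Granting these, the first assertion \eqref{eq:Lem67Eq6} is immediate and purely pathwise: for fixed $\omega$,
\[
|\bar u^{\varepsilon,N}_t(x)-\bar u^{\varepsilon,N}_t(y)|\le\frac1N\sum_i\bar V^{\,i}_t\,|K_\varepsilon(x-\bar\xi^i_t)-K_\varepsilon(y-\bar\xi^i_t)|\le e^{M_\Lambda T}\,\frac{\Vert\nabla K\Vert_\infty}{\varepsilon^{d+1}}\,|x-y|,
\]
and the same computation with $K_\varepsilon$ replaced by $\nabla K_\varepsilon$, now using the Lipschitz constant of $\nabla K_\varepsilon$, gives the $\varepsilon^{-(d+2)}$ bound; the regularity of $K$ makes these hold for all, hence a.e., $x,y$.

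For the second assertion \eqref{eq:Lem67Eq8} I would split, for every $x$,
\[
\bar u^{\varepsilon,N}_t(x)-\bar u^{\varepsilon,N}_{r(t)}(x)=\frac1N\sum_i\big(\bar V^{\,i}_t-\bar V^{\,i}_{r(t)}\big)K_\varepsilon(x-\bar\xi^i_t)+\frac1N\sum_i\bar V^{\,i}_{r(t)}\big(K_\varepsilon(x-\bar\xi^i_t)-K_\varepsilon(x-\bar\xi^i_{r(t)})\big),
\]
together with its analogue with $\nabla K_\varepsilon$ in place of $K_\varepsilon$. In the first sum the two observations above give the \emph{deterministic} bound $e^{M_\Lambda T}M_\Lambda\Vert K\Vert_\infty\,\delta t\,\varepsilon^{-d}$ (resp. $e^{M_\Lambda T}M_\Lambda\Vert\nabla K\Vert_\infty\,\delta t\,\varepsilon^{-(d+1)}$ in the gradient case), which, since $\varepsilon\le1$ and $\delta t\le\sqrt{T}\sqrt{\delta t}$, is $\le C\sqrt{\delta t}\,\varepsilon^{-(d+1)}$ (resp. $\le C\sqrt{\delta t}\,\varepsilon^{-(d+2)}$). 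In the second sum I use the Lipschitz constant of $K_\varepsilon$ (resp. of $\nabla K_\varepsilon$) to bound it by $e^{M_\Lambda T}\varepsilon^{-(d+1)}\Vert\nabla K\Vert_\infty\cdot\frac1N\sum_i|\bar\xi^i_t-\bar\xi^i_{r(t)}|$ (resp. with $\varepsilon^{-(d+2)}L_{\nabla K}$). Since all of these bounds are uniform in $x$, they pass to the $\Vert\cdot\Vert_\infty$-norm, and taking expectations together with the standard Euler-increment estimate $\E|\bar\xi^i_t-\bar\xi^i_{r(t)}|\le M_\Phi\sqrt{\delta t}+M_g\,\delta t\le C\sqrt{\delta t}$ — obtained from It\^o's isometry applied to $\int_{r(t)}^t\Phi(r(s),\bar\xi^i_{r(s)})\,dW^i_s$, the boundedness of $g$, and $t-r(t)\le\delta t$ — yields \eqref{eq:Lem67Eq8}.

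I do not expect a genuine obstacle: the proof is essentially a careful bookkeeping of the powers of $\varepsilon$ produced by differentiating the mollifier, and the Lipschitz assumption on $\nabla K$ is precisely what supplies the extra factor $\varepsilon^{-1}$ in the gradient estimates. The one point worth a remark is that the weights $\bar V^{\,i}$ depend on the whole trajectories of $\bar\xi^i$ and of $\bar u^{\varepsilon,N}$ evaluated along $\bar\xi^i$, which are defined only implicitly through the fixed-point relation of \eqref{eq:tildeYuP3}; this causes no trouble, because $|\bar V^{\,i}_t-\bar V^{\,i}_{r(t)}|$ only involves the increment over $[r(t),t]$ of the \emph{uniformly bounded} integrand $\Lambda$, so that it is the boundedness of $\Lambda$ — not its Lipschitz continuity — that is actually used at this step.
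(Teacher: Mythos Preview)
Your proposal is correct and follows essentially the same approach as the paper: both proofs exploit the explicit kernel representation, bound the weights uniformly by $e^{M_\Lambda T}$, use the Lipschitz constants of $K_\varepsilon$ and $\nabla K_\varepsilon$ for the spatial estimates, and for the time increment split into a weight-difference term (controlled by the boundedness of $\Lambda$ over an interval of length $\delta t$) and a kernel-difference term (controlled via the Euler increment $\E|\bar\xi^i_t-\bar\xi^i_{r(t)}|\le C\sqrt{\delta t}$). The only cosmetic difference is the order of the two summands in your decomposition of $\bar u^{\varepsilon,N}_t-\bar u^{\varepsilon,N}_{r(t)}$, and your explicit remark that only the boundedness of $\Lambda$ is used for $|\bar V^{\,i}_t-\bar V^{\,i}_{r(t)}|$ is in fact exactly what the paper does as well.
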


\begin{proof}[Proof of Proposition \ref{prop:CvgTime}.]
In this proof, $C$ denotes a real positive constant 
(depending on $M_{\Phi}$, $M_g$, $M_{\Lambda}$, $\Vert K \Vert_{\infty}$,
$\Vert \nabla K \Vert_{\infty}$,
 $L_{\Phi}$, $L_g$, $L_{\Lambda}$, $L_{\nabla K}$, $T$) that may change from line to line. Let us fix $\varepsilon > 0$, $N \in \N^{\star}$, $n \in \N^{\star}$. \\
 For any $\ell=1,\cdots , d$, we introduce the real-valued function $G^{\ell}_{\varepsilon}$ defined on $\R^d$ such that 
\begin{equation}
\label{eq:G}
G^{\ell}_{\varepsilon}(x):=\frac{1}{\varepsilon^d} \frac{\partial K}{\partial x_{\ell}}\left(\frac{x}{\varepsilon}\right)\ ,\quad \textrm{for almost all }\ x\in \R^d\ .
\end{equation}
Let us now prove inequality \eqref{eq:CvgTimeIneq}.
It is easy to observe that there exists a constant $C > 0$ depending on $\Vert K \Vert_1, \Vert \frac{\partial K}{\partial x_{\ell}} \Vert_1, \ell=1,\cdots,d,$ such that
\begin{equation}
\label{eq:KG}
\Vert K_{\varepsilon} \Vert_1 + \sum_{\ell=1}^d \Vert G^{\ell}_{\varepsilon} \Vert_1 \leq C \ ,
\end{equation}
and
\begin{equation}
\label{eq:KG2}
\Vert K_{\varepsilon} \Vert_{\infty} + \sum_{\ell=1}^d \Vert G^{\ell}_{\varepsilon} \Vert_{\infty} \leq \frac{C}{\varepsilon^d} \ .
\end{equation}
From \eqref{eq:ueN} and \eqref{eq:ueNn}, we recall that $u^{\varepsilon,N}$ and $\bar u^{\varepsilon,N}$ are defined by
\begin{equation}
\label{eq:uConvol}
\forall \; t \in [0,T], \; u_t^{\varepsilon,N} = K_{\varepsilon} \ast \gamma^{\varepsilon,N}_t \quad \textrm{and} \quad \bar u^{\varepsilon,N,n}_t = K_{\varepsilon} \ast \bar \gamma^{\varepsilon,N,n}_t.
\end{equation}
From now on we will set 
 $\bar u^{\varepsilon,N}:=\bar u^{\varepsilon,N, n}$ and
 $\bar \gamma^{\varepsilon,N}:=\bar \gamma^{\varepsilon,N, n}$.
For all $t \in [0,T]$, we have
\begin{eqnarray}
\label{eq:TimeCvg2}
\E \Big[ \Vert u^{\varepsilon,N}_t - \bar{u}^{\varepsilon,N}_{t} \Vert_1 \Big] + \E \Big[ \Vert \nabla u^{\varepsilon,N}_t - \nabla \bar{u}^{\varepsilon,N}_{t} \Vert_1 \Big]
& \leq & 
\E \Big[ \Vert K_{\varepsilon} \ast(\gamma^{\varepsilon,N}_t - \bar \gamma^{\varepsilon,N}_t) \Vert_1 \Big] + \frac{1}{\varepsilon} \sum_{l=1}^d \E \Big[ \Vert G^{\ell}_{\varepsilon} \ast (\gamma^{\varepsilon,N}_t - \bar \gamma^{\varepsilon,N}_t) \Vert_1 \Big] \nonumber \\
& \leq & \E \Big[ \Vert \gamma^{\varepsilon,N}_t - \bar \gamma^{\varepsilon,N}_t \Vert_{TV} \Big] + \frac{1}{\varepsilon}\sum_{\ell=1}^d \Vert G^{\ell}_{\varepsilon} \Vert_1 \E \Big[ \Vert \gamma^{\varepsilon,N}_t - \bar \gamma^{\varepsilon,N}_t \Vert_{TV} \Big] \nonumber \\
& = & \frac{C}{\varepsilon} \E \Big[ \Vert \gamma^{\varepsilon,N}_t - \bar \gamma^{\varepsilon,N}_t \Vert_{TV} \Big] \; \textrm{ by } \eqref{eq:KG} \ .
\end{eqnarray}
For $t \in [0,T]$, let us consider 
\begin{eqnarray}
\label{eq:TimeCvg3}
\E \Big[ \Vert \gamma^{\varepsilon,N}_t - \bar \gamma^{\varepsilon,N}_t \Vert_{TV} \Big] 
& = & 
\frac{1}{N} \sum_{i=1}^N  \E \Big[ \Big \vert V_t\big (\xi^{i},u^{\varepsilon,N}(\xi^{i}),\nabla u^{\varepsilon,N}(\xi^{i}) \big ) - \bar{V}_t\big (\bar{\xi}^{i},\bar{u}^{\varepsilon,N}(\bar{\xi}^{i}),\nabla \bar u^{\varepsilon,N,n}(\bar{\xi}^{i}) \big ) \Big \vert \Big] \nonumber \\
& \leq & 
\frac{1}{N} \sum_{i=1}^N  \E \Big[ \Big \vert V_t\big (\xi^{i},u^{\varepsilon,N}(\xi^{i}),\nabla u^{\varepsilon,N}(\xi^{i}) \big ) - V_t\big (\xi^{i},\bar{u}^{\varepsilon,N}(\xi^{i}),\nabla \bar u^{\varepsilon,N}(\xi^{i}) \big ) \Big \vert \Big] \nonumber \\
&& 
\; + \; \frac{1}{N} \sum_{i=1}^N  \E \Big[ \Big \vert V_t\big (\xi^{i},\bar u^{\varepsilon,N}(\xi^{i}),\nabla \bar u^{\varepsilon,N}(\xi^{i}) \big ) - V_t\big (\bar \xi^{i},\bar{u}^{\varepsilon,N}(\bar \xi^{i}),\nabla \bar u^{\varepsilon,N}(\bar \xi^{i}) \big ) \Big \vert \Big] \nonumber \\
&& 
\; + \; \frac{1}{N} \sum_{i=1}^N  \E \Big[ \Big \vert V_t\big (\bar \xi^{i},\bar u^{\varepsilon,N}(\bar \xi^{i}),\nabla \bar u^{\varepsilon,N}(\bar \xi^{i}) \big ) - \bar V_t\big (\bar \xi^{i},\bar{u}^{\varepsilon,N}(\bar \xi^{i}),\nabla \bar u^{\varepsilon,N}(\bar \xi^{i}) \big ) \Big \vert \Big]. \nonumber \\
\end{eqnarray}
We are now interested in bounding each term in the r.h.s. of \eqref{eq:TimeCvg3}. Let us fix $t \in [0,T]$, $i \in \{1,\cdots,N\}$. Since $\Lambda$ is bounded and Lipschitz, inequality \eqref{eq:LipV} implies
\begin{eqnarray}
\label{eq:ViMajor}
A^{i,\varepsilon,N,n}_t & := & \E \Big[ \Big \vert V_t\big (\xi^{i},u^{\varepsilon,N}(\xi^{i}),\nabla u^{\varepsilon,N}(\xi^{i}) \big ) - V_t\big (\xi^{i},\bar{u}^{\varepsilon,N}(\xi^{i}),\nabla \bar u^{\varepsilon,N}(\xi^{i}) \big ) \Big \vert \Big] \nonumber \\
& \leq & e^{M_{\Lambda}T} \E \Big[ \int_0^t \Big \vert \Lambda(s, \xi^{i}_{s},u^{\varepsilon,N}_{s}( \xi^{i}_{s}), \nabla u^{\varepsilon,N}_{s}(\xi^{i}_{s})) - \Lambda(s, \xi^{i}_{s},\bar{u}^{\varepsilon,N}_{s}(\xi^{i}_{s}), \nabla \bar{u}^{\varepsilon,N}_{s}( \xi^{i}_{s}))  \Big \vert \Big] \; ds \nonumber \\
& \leq & e^{M_{\Lambda}T} L_{\Lambda} \int_0^t \Big \{ \E \big[ \vert u_s^{\varepsilon,N}(\xi^{i}_s) - \bar{u}_{s}^{\varepsilon,N}(\xi^{i}_{s}) \vert \big] + \E \big[ \vert \nabla u_s^{\varepsilon,N}(\xi^{i}_s) - \nabla \bar{u}_{s}^{\varepsilon,N}(\xi^{i}_{s}) \vert \big]    \Big \} \; ds \ . \nonumber \\
\end{eqnarray}
Taking into account \eqref{eq:uConvol}, for all $s \in [0,T]$, it follows
\begin{eqnarray}
\label{eq:Major1}
\E \big[ \vert u_s^{\varepsilon,N}(\xi^{i}_s) - \bar{u}_{s}^{\varepsilon,N}(\xi^{i}_{s}) \vert \big] & = & \E \big[ \vert K_{\varepsilon} \ast (\gamma^{\varepsilon,N}_s - \bar \gamma^{\varepsilon,N}_s )(\xi^{i}_{s}) \vert \big] \nonumber \\
& \leq & 
  \frac{C}{\varepsilon^{d}}
 \E \Big[ \Vert \gamma^{\varepsilon,N}_s - \bar \gamma^{\varepsilon,N}_s \Vert_{TV} \Big] \ ,
\end{eqnarray}
where we have used inequality \eqref{eq:KG2}. Similarly, we also obtain
\begin{eqnarray}
\label{eq:Major2}
\E \big[ \vert \nabla u_s^{\varepsilon,N}(\xi^{i}_s) - \nabla \bar{u}_{s}^{\varepsilon,N}(\xi^{i}_{s}) \vert \big] & = & \frac{1}{\varepsilon} \sum_{\ell=1}^d \E \big[ \vert G_{\varepsilon}^{\ell} \ast (\gamma^{\varepsilon,N}_s - \bar \gamma^{\varepsilon,N}_s )(\xi^{i}_{s}) \vert \big] \nonumber \\
& \leq & \frac{C}{\varepsilon^{d+1}} \E \Big[ \Vert \gamma^{\varepsilon,N}_s - \bar \gamma^{\varepsilon,N}_s \Vert_{TV} \Big] \ ,
\end{eqnarray}
for all $s \in [0,T]$. Injecting \eqref{eq:Major1} and \eqref{eq:Major2} in the r.h.s. of \eqref{eq:ViMajor} yields
\begin{eqnarray}
\label{eq:MajorA}
A^{i,\varepsilon,N,n}_t & \leq & \frac{C}{\varepsilon^{d+1}} \int_0^t \E \Big[ \Vert \gamma^{\varepsilon,N}_s - \bar \gamma^{\varepsilon,N}_s \Vert_{TV} \Big] \; ds \ .
\end{eqnarray}
Concerning the second term in the r.h.s. of \eqref{eq:TimeCvg3}, we invoke again \eqref{eq:LipV} to obtain
\begin{eqnarray}
\label{eq:MajorB}
B^{i,\varepsilon,N,n}_t & := & \E \Big[ \Big \vert V_t\big (\xi^{i},\bar u^{\varepsilon,N}(\xi^{i}),\nabla \bar u^{\varepsilon,N}(\xi^{i}) \big ) - V_t\big (\bar \xi^{i},\bar{u}^{\varepsilon,N}(\bar \xi^{i}),\nabla \bar u^{\varepsilon,N}(\bar \xi^{i}) \big ) \Big \vert \Big] \nonumber \\
& \leq & e^{M_{\Lambda}T}L_{\Lambda} \E \Big[ \int_0^t \Big \vert \Lambda(s, \xi^{i}_{s},\bar u^{\varepsilon,N}_{s}( \xi^{i}_{s}), \nabla \bar u^{\varepsilon,N}_{s}(\xi^{i}_{s})) - \Lambda(s, \bar \xi^{i}_{s},\bar{u}^{\varepsilon,N}_{s}(\bar \xi^{i}_{s}), \nabla \bar{u}^{\varepsilon,N}_{s}( \bar \xi^{i}_{s}))  \Big \vert \Big] \; ds \nonumber \\
& \leq & e^{M_{\Lambda}T} L_{\Lambda} \int_0^t \Big \{ \E \Big[ \vert \xi^i_s - \bar \xi^i_s \vert \Big] + \E \big[ \vert \bar u_s^{\varepsilon,N}(\xi^{i}_s) - \bar{u}_{s}^{\varepsilon,N}(\bar \xi^{i}_{s}) \vert \big] + \E \big[ \vert \nabla \bar u_s^{\varepsilon,N}(\xi^{i}_s) - \nabla \bar{u}_{s}^{\varepsilon,N}(\bar \xi^{i}_{s}) \vert \big]    \Big \} \; ds \nonumber \\
& \leq & \frac{Ce^{M_{\Lambda}T} L_{\Lambda}T \sqrt{\delta t}}{\varepsilon^{d+2}} \nonumber \\
& \leq & \frac{C}{\varepsilon^{d+2} \sqrt{n}}, 
\end{eqnarray}
where we have used successively classical bounds of the Euler scheme (see e.g. Section 10.2, Chapter 10 in \cite{kloeden})  and \eqref{eq:Lem67Eq6}. \\
Regarding the third term, similarly as for the above inequality 
\eqref{eq:MajorB},
\eqref{eq:LipV} yields
\begin{eqnarray}
\label{eq:MajorC}
C^{i,\varepsilon,N,n}_t & := & \E \Big[ \Big \vert V_t\big (\bar \xi^{i},\bar u^{\varepsilon,N}(\bar \xi^{i}),\nabla \bar u^{\varepsilon,N}(\bar \xi^{i}) \big ) - \bar V_t\big (\bar \xi^{i},\bar{u}^{\varepsilon,N}(\bar \xi^{i}),\nabla \bar u^{\varepsilon,N}(\bar \xi^{i}) \big ) \Big \vert \Big] \nonumber \\
& \leq & e^{M_{\Lambda}T}L_{\Lambda} \int_0^t \Big( \vert s - r(s) 
\vert^{\frac{1}{2}} + \E \Big[ \vert \bar{\xi}^i_s - \bar{\xi}^i_{r(s)} \vert \Big] + \E \Big[ \vert \bar u^{\varepsilon,N}_{s}(\bar{\xi}^{i}_s) -  \bar u^{\varepsilon,N}_{r(s)}(\bar \xi^{i}_{r(s)})  \vert  \Big] \nonumber \\
&& \; + \; \E \Big[ \vert \nabla \bar u^{\varepsilon,N}_{s}(\bar{\xi}^{i}_s) -  \nabla \bar u^{\varepsilon,N}_{r(s)}(\bar \xi^{i}_{r(s)})  \vert  \Big]  \Big) ds \ ,
\end{eqnarray}
where we have used H\"older property of $\Lambda$ w.r.t. the time variable. \\
Boundedness of $\Phi$, $g$ with classical Burkholder-Davis-Gundy (BDG) inequality give 
\begin{eqnarray}
\label{eq:648}
\E \big[ \vert \bar \xi^i_s - \bar \xi^i_{r(s)} \vert \big] & \leq & 2C \sqrt{ \delta t} \leq \frac{C}{\sqrt{n}} \ , \quad s \in [0,T] \ .
\end{eqnarray}
To bound the third term in the r.h.s. of \eqref{eq:MajorC}, we use the following decomposition: for all $s \in [0,T]$, 
\begin{eqnarray}
\label{eq:659}
\E \big[ \vert \bar u_s^{\varepsilon,N}(\bar \xi^{i}_s) - \bar{u}_{r(s)}^{\varepsilon,N}(\bar{\xi}^{i}_{r(s)}) \vert \big] & \leq & \E \big[ \vert \bar u_s^{\varepsilon,N}(\bar \xi^{i}_s) - \bar{u}_{s}^{\varepsilon,N}(\bar \xi^{i}_{r(s)}) \vert \big] + \E \big[ \vert \bar u_s^{\varepsilon,N}(\bar \xi^{i}_{r(s)}) - \bar{u}_{r(s)}^{\varepsilon,N}(\bar{\xi}^{i}_{r(s)}) \vert \big] \ .
\end{eqnarray}
We first observe that the first inequality \eqref{eq:Lem67Eq6} 
gives
\begin{eqnarray}
\label{eq:660}
\E \big[ \vert \bar u_s^{\varepsilon,N}(\bar \xi^{i}_s) - \bar{u}_{s}^{\varepsilon,N}(\bar \xi^{i}_{r(s)}) \vert \big] & \leq & \frac{C}{\varepsilon^{d+1}} \E \Big[ \vert \bar \xi^i_s - \bar \xi^i_{r(s)} \vert \Big] \leq \frac{C \sqrt{\delta t}}{\varepsilon^{d+1}} \leq \frac{C }{\varepsilon^{d+1} \sqrt{n}} \ ,
\end{eqnarray}
for all $s \in [0,T]$. Invoking now the first inequality of \eqref{eq:Lem67Eq8} leads to
\begin{equation}
\label{eq:661}
\E \big[ \vert \bar u_s^{\varepsilon,N}(\bar \xi^{i}_{r(s)}) - \bar{u}_{r(s)}^{\varepsilon,N}(\bar{\xi}^{i}_{r(s)}) \vert \big] \leq \frac{C \sqrt{\delta t}}{\varepsilon^{d+1}} \leq \frac{C}{\varepsilon^{d+1} \sqrt{n}} \ , \quad s \in [0,T] \ .
\end{equation}
Injecting now \eqref{eq:661} and \eqref{eq:660} in \eqref{eq:659} yield
\begin{equation}
\label{eq:662}
\E \big[ \vert \bar u_s^{\varepsilon,N}(\bar \xi^{i}_s) - \bar{u}_{r(s)}^{\varepsilon,N}(\bar{\xi}^{i}_{r(s)}) \vert \big] \leq 
 \frac{C}{\varepsilon^{d+1} \sqrt{n}} \ , \quad s \in [0,T] \ .
\end{equation}
With very similar arguments as those used to obtain \eqref{eq:662} (i.e. decomposition \eqref{eq:659} and inequalities \eqref{eq:Lem67Eq6}, \eqref{eq:Lem67Eq8}), we obtain for all $s \in [0,T]$, 
\begin{equation}
\label{eq:663}
\E \Big[ \vert \nabla \bar u^{\varepsilon,N}_{s}(\bar{\xi}^{i}_s) -  \nabla \bar u^{\varepsilon,N}_{r(s)}(\bar \xi^{i}_{r(s)})  \vert  \Big] \leq \frac{C \sqrt{\delta t}}{\varepsilon^{d+2}} \leq \frac{C}{\varepsilon^{d+2} \sqrt{n}} \ .
\end{equation}
Gathering \eqref{eq:663}, \eqref{eq:662} and \eqref{eq:648} in \eqref{eq:MajorC} gives
\begin{eqnarray}
\label{eq:664}
C^{i,\varepsilon,N,n}_t & \leq & \frac{C \sqrt{\delta t}}{\varepsilon^{d+2}} \leq \frac{C}{\varepsilon^{d+2} \sqrt{n}}\ .
\end{eqnarray}
Finally, injecting \eqref{eq:664}, \eqref{eq:MajorB} and \eqref{eq:MajorA} in \eqref{eq:TimeCvg3}, we obtain for all $t \in [0,T]$, 
\begin{eqnarray}
\E \Big[ \Vert \gamma^{\varepsilon,N}_t - \bar \gamma^{\varepsilon,N}_t \Vert_{TV} \Big] & \leq & C \Big( \frac{1}{\varepsilon^{d+2} \sqrt{n}}   + \frac{1}{\varepsilon^{d+1}} \int_0^t \E \Big[ \Vert \gamma^{\varepsilon,N}_s - \bar \gamma^{\varepsilon,N}_s \Vert_{TV} \Big] \; ds \Big) \ .
\end{eqnarray}
Gronwall's lemma applied to the function $t \in [0,T] \mapsto \E \Big[ \Vert \gamma^{\varepsilon,N}_t - \bar \gamma^{\varepsilon,N}_t \Vert_{TV} \Big]$ implies
\begin{equation}
\label{eq:Final}
\E \Big[ \Vert \gamma^{\varepsilon,N}_t - \bar \gamma^{\varepsilon,N}_t \Vert_{TV} \Big] \leq \frac{C}{\varepsilon^{d+2} \sqrt{n}} e^{\frac{C}{\varepsilon^{d+1}}} \ , \quad t \in [0,T] \ .
\end{equation}
The result follows by injecting \eqref{eq:Final} in \eqref{eq:TimeCvg2}.
\end{proof}

The particle algorithm used to simulate the dynamics~\eqref{eq:tildeYuP3} consists of the following steps.
\begin{description}
	\item[Initialization] 
	for $k=0$.
	\begin{enumerate} 
		\item Generate $(\bar{\xi}_{0}^{i})_{i=1,..,N}$  i.i.d.$\sim$ $u_0(x)dx$;
		\item set $G_0^i := 1$, $i = 1,\cdots,N$;
		\item set $\bar{u}^{\varepsilon,N}_{t_0}(\cdot) := (K_{\varepsilon} \ast u_0)(\cdot)$.   
	\end{enumerate}
	\item[Iterations]  
	for $k = 0, \cdots , n-1$.
		\begin{itemize}
			\item For $i=1,\cdots N$, set 
			$\bar \xi^{i}_{t_{k+1}}:= {\bar{\xi}^{i}_{t_k}} + \Phi(t_{k},{\bar{\xi}^{i}_{t_k}}) \sqrt{\delta t} \; \epsilon^i_{k+1} + g(t_{k},{\bar{\xi}^{i}_{t_k}}) \delta t \ ,$
			where $(\epsilon^i_{k})^{i=1, \cdots, N}_{k=1,\cdots n}$ is a sequence of i.i.d centered and standard Gaussian variables; 
		\end{itemize}
 
		\begin{itemize}
			\item for $i=1,\cdots N$, set $G_{k+1}^i := G_{k}^i \times \exp \left( \Lambda(t_{k},{ \bar{\xi}^{i}_{t_k}},{\bar{u}}^{\varepsilon,N}_{t_k}({ \bar{\xi}^{i}_{t_k}}), \nabla {\bar{u}}^{\varepsilon,N}_{t_k}({ \bar{\xi}^{i}_{t_k}}) ) \delta t  \right)\ ;$
			\item set ${\bar{u}}^{\varepsilon,N}_{t_{k+1}}(\cdot) = {\displaystyle \frac{1}{N} \sum_{i=1}^{N}  G_{k+1}^i \times K_{\varepsilon}(\cdot - { \bar{\xi}^{i}_{t_{k+1}}})}. $        
  		\end{itemize}
\end{description}

\begin{rem}
Observe that each particle evolves independently without any  interaction by contrast to the case considered in \cite{LOR1,LOR2}. However, since the evaluation of the function ${\bar{u}}^{\varepsilon,N}$ at any point $(t_k,\bar \xi^i_{t_k})$ requires to sum up $N$ terms, the complexity of the algorithm is still of order $nN^2$. 
However, there are several strategies to speed up the evaluation of ${\bar{u}}^{\varepsilon,N}_{t_k}(\bar \xi^i_{t_k})$. 
By a judicious partition of the space, we can efficiently approximate this evaluation with a complexity of order $N\log(N)$. 
The basic idea is that, only a small part of the particles will really contribute to ${\bar{u}}^{\varepsilon,N}_{t_k}(\bar \xi^i_{t_k})$, most of particles being too far away from $\bar \xi^i_{t_k}$. Dual tree recursions based on k-d tree allow to perform this approximation efficiently with tight accuracy guarantees, see~\cite{GrayMoore2003}.

\end{rem}

\section{Numerical simulations}
\label{sec:simulations}
\setcounter{equation}{0}

The aim of this section is to illustrate the performances  of our original
 numerical scheme  to approximate the solution 
of semilinear PDEs~\eqref{eq:PDE}, inspect to what extent this approach
 remains valid out of  Assumption~\ref{ass:main2P3}
and to provide a perspective of application to stochastic control problems.
First we consider the one dimensional Burgers equation and then
the production / inventory control problem that we relate
to
the $d$-dimensional KPZ equation.

\subsection{Burgers equation}

Let $u_0$ be a probability density on $\R$ and set
 $U_0 = \int^\cdot_{-\infty} u_0(y) dy$. 
Let us consider the viscid Burgers equation in dimension $d = 1$, given by
\begin{equation}
\label{eq:Burgers}
\left \{ 
\begin{array}{l}
\partial_t u = \frac{\nu^2}{2} \partial_{xx} u - u \partial_x u, \quad (t,x) \in [0,T] \times \R, \nu > 0 \\
u(0,\cdot) = u_0 \ .
\end{array}
\right .
\end{equation}
It is well-known (see e.g. \cite{DelarueMenozzi}) that \eqref{eq:Burgers} admits a unique classical solution 
if $u_0 \in L^1(\R^d)$.
Moreover, 
using the so-called Cole-Hopf transformation, the solution $u$ admits the semi-explicit formula
\begin{eqnarray}
\label{eq:ExpliBurg}
u(t,x) = \frac{\E[u_0(x+ \nu B_t)e^{-\frac{U_0(x+ \nu B_t)}{\nu^2}}]}{\E[e^{-\frac{U_0(x+\nu B_t)}{\nu^2}}]}, \quad (t,x) \in [0,T] \times \R  ,
\end{eqnarray}
where $B$ denotes the real-valued standard Brownian motion. 
Integrating against test functions in space it is not difficult to show that
the classical solution $u$ is also a weak solution of \eqref{eq:PDE} with
$$\Phi = \nu, g \equiv 0, \Lambda(t,x,y,z) =  z.$$ Apparently our Assumption \ref{ass:main2P3} is not fulfilled,
at least for what concerns $\Lambda$.
However choosing $u_0$ being a bounded probability density, it is not difficult to show that
there exists $M > 0$ such that $u$ is a solution of the subsidiary equation of type \eqref{eq:PDE}
with $\Phi \equiv \nu, \Lambda(t,x,y,z) := \Lambda_M (z)$ where $\Lambda_M: \R \rightarrow \R$
is a smooth bounded function such that $\Lambda_M(z) = z$ if $\vert z \vert \le M$
and  $\Lambda_M(z) = 0$  if      $\vert z \vert > M + 1  $.
In this case  Assumption \ref{ass:main2P3} is fulfilled for the subsidiary equation.

In our numerical tests, we have implemented 
the time discretized particle scheme~\eqref{eq:tildeYuP3}  with the following values of parameters 
$\Phi(t,x) := \nu ,\,
g(t,x) := 0 ,\,
\Lambda(t,x,y,z) := z\,$, in order to approximate the solution of~\eqref{eq:Burgers}. 

\subsection{The production/inventory control problem
and KPZ (deterministic) equation}

Let us introduce a multivariate  extension of the Production/Inventory planning studied in~\cite{Bensoussanetal84}. 
Consider a factory producing several goods indexed by $i=1,\cdots ,d$. For each good $i$ and any time $t\in [0,T]$, let 
$(X^i_t)$ denote the inventory
level; $(D^i_t)$  the random demand rate and $(p^i_t)$ the production rate at time $t$.  Let us denote $X_t:=(X^i_t)_{i=1,\cdots , d}\,$, $p_t:=(p^i_t)_{i=1,\cdots , d}$ and $D_t:=(D^i_t)_{i=1,\cdots , d}$. The $d$-dimensional inventory process $X$ is modelled as the controlled
diffusion
\begin{equation}
\label{eq:inventoryDyn}
\left\{
\begin{array}{l}
dX^{0,x,p}_t = p_tdt-dD_t\ ,\quad \textrm{with}\quad dD_t=\bar d_t dt + diag(\sigma)dW_t\\
X^{0,x,p}_0=x\ ,
\end{array}
\right .
\end{equation}
where  $W$ is a $d$-dimensional Brownian motion, $\bar d_t\in \R^d$ is the (deterministic) average demand rate and $\sigma=(\sigma_1,\cdots,\sigma_d)$ with $\sigma_i$ being the volatility of the demand rate $D^i$. The aim is to minimize over non-anticipative production rates $(p_t)$, the following expected cost: 
\begin{equation}
\label{eq:inventoryCost}
\E\left [g(X_T)+\int_0^T \big [\sum_{i=1}^d c^i(p^i_s-\bar p^i_s)^2+h(X_s)\big ]\,ds\right ]\ ,
\end{equation}
where $(c^i)_i$ and $(\bar p^i)_{i}$ are parameters for the quadratic production cost and $h,g: x\in\R^d\mapsto h(x), g(x)\in\R$ are  nonlinear functions 
respectively representing the inventory holding cost and the inventory terminal cost.
The value function is 
\begin{equation}
\label{eq:Bellman}
v(t,x):=\sup_{p}\E\left [g(X^{t,x}_T)+\int_t^T \big [\sum_{i=1}^d c^i(p^i_s-\bar p^i_s)^2+h(X^{t,x}_s)\big ]\,ds\right ].
\end{equation}
$v$
is solution of the
 Hamilton-Jacobi-Bellman equation 
\begin{equation}
\label{eq:HJB}
\left \{
\begin{array}{l}
\partial_tv+ \sum_{i=1}^d \frac{1}{4c_i}(\partial_{x_i}v)^2+\sum_{i=1}^d (\bar p^i_t-\bar{d}^i_t)\partial_{x_i}v+\frac{1}{2}\sum_{i=1}^d \sigma^2_i \partial^2_{x_ix_i}v-h=0\\
v(T,x)=g(x),
\end{array}
\right .
\end{equation}
provided \eqref{eq:HJB} has a solution with some minimal regularity,
according to the usual verification theorems in stochastic optimal control.
When $g$ and $h$ are quadratic functions, this retrieves a linear
quadratic Gaussian control problem for which an explicit solution is
available, see \cite{Bensoussanetal84}.
Otherwise no explicit solution exists and so we have to rely
on numerical methods for non-linear PDEs. 

Consider the specific case where $\bar p = \bar d$, $c^i=\frac{1}{2}$ 
and $\sigma_i=\nu>0$ for any $i=1,\cdots,d$ and $h=0$. By a simple transformation
involving a change of time ($u(t,x):= \frac{1}{2} v(T-t,x)$), we remark that equation~\eqref{eq:HJB} reduces to the KPZ equation 
\begin{equation}
\label{eq:KPZ}
\left \{  
\begin{array}{l}
\partial_t u = \frac{\nu^2}{2} \Delta u + \vert \nabla u \vert^2, \quad \textrm{for any}\ (t,x) \in [0,T] \times \R^d, \\
u(0,dx) = u_0(x)dx \ ,
\end{array}
\right .
\end{equation}
where $\Delta$ denotes as usual the Laplace operator  and we recall that $\vert \cdot \vert$ denotes the Euclidean norm on $\R^d$.

Using again the Cole-Hopf transformation, \cite{DelarueMenozzi}
have shown that
 there is a solution $u$ admitting the semi-explicit formula
\begin{eqnarray}
\label{eq:ExpliKPZ}
u(t,x) = \log \Big(\E\big[e^{u_0(x + \sigma B_t)} \big] \Big) \ ,
\end{eqnarray}
where  $B$ denotes   a $\R^d$-valued standard Brownian motion. 
In our numerical tests, \eqref{eq:KPZ} constitutes a benchmark for the stochastic control problem \eqref{eq:inventoryDyn}-\eqref{eq:Bellman}.

We suppose here that the initial condition $u_0$ is chosen strictly 
positive which ensures
$u(t,x) \neq 0$ for all $(t,x) \in [0,T] \times \R^d$.
 Indeed we have
  $e^{u(t,x)}=\displaystyle{\E\big[e^{u_0(x + \sigma B_t)} \big] \geq 1 + \E[u_0(x + \sigma B_t)] > 1}$ for all $(t,x) \in [0,T] \times \R^d$.
We remark that a strictly positive function $u$ is solution of \eqref{eq:KPZ} if and only if it is a solution of equation
\begin{equation}
\label{eq:KPZ2}
\left \{  
\begin{array}{l}
\partial_t u = \frac{\nu^2}{2} \Delta u + u \Lambda(t,x,u,\nabla u), \quad (t,x) \in [0,T] \times \R^d, \\
\Lambda(t,x,y,z):= \frac{\vert z\vert ^2}{y}\ ,\quad \textrm{for any} \ (t,x,y,z)\in [0,T]\times \R^d\times ]0,+ \infty[\times \R^d\ ,\\
u(0,\cdot) = u_0 \ .
\end{array}
\right .
\end{equation}
Notice that $\Lambda$ here is clearly not Lipschitz and then it does not satisfy Assumption \ref{ass:main2P3}. 
However,  in our numerical tests,  we have implemented 
the time discretized particle scheme~\eqref{eq:tildeYuP3} with the 
 choice of parameters
$\Phi(t,x) := \nu ,\, 
g(t,x) := 0$ and $\Lambda(t,x,y,z):= \frac{\vert z\vert ^2}{y}$, to approximate the solution of~\eqref{eq:KPZ2}.

\subsection{Details of the implementation}
In our figures, we have reported an approximation of the $L^1$-mean error committed by our numerical scheme~\eqref{eq:tildeYuP3} at the terminal time $T$. This error is approximated by Monte Carlo simulations as 
\begin{equation}
\label{eq:MISEApproxP3}
\E[ \Vert \bar u_T^{\varepsilon,N,n}-u_T\Vert_1 ] \approx \frac{1}{MQ}\sum_{i=1}^M \sum_{j=1}^Q \vert \bar u_T^{\varepsilon,N,n,i}(X^j)- \hat{u}_T(X^j)\vert \; u^{-1}_0(X^j)\ ,\quad\quad \textrm{where,}
\end{equation}
\begin{itemize}
	\item $(\bar u^{\varepsilon,N,n,i}_T)_{i=1,\cdots ,M=100}$ are i.i.d. estimates based on $M$ i.i.d. particle systems;
	\item $(X^j)_{j=1,\cdots, Q=1000}$ are i.i.d $\R^d$-valued random variables (independent of the particles defining $(\bar u^{\varepsilon,N,n,i}_T)_{i=1,\cdots ,M=100}$), with common density $u_0$;
	\item $\hat u_T$ denotes a Monte Carlo estimation of the exact solution, $u_T$, with $10000$ simulations approximating the expectation formulas~\eqref{eq:ExpliBurg} for the Burgers equation and~\eqref{eq:ExpliKPZ} for the KPZ equation. 
\end{itemize}
The parameters of the problem in both cases (Burgers and KPZ) are  $T=0.1, \nu=0.1$ and the initial distribution $u_0$ is the centered and standard Gaussian distribution $\mathcal{N}(0,I_d)$. \\
Concerning the parameters of our numerical scheme, $n=10$ time steps and $K=\phi^d$ with $\phi^d$ being the standard and centered Gaussian density on $\R^d$. 
To illustrate the trade-off condition (see \eqref{eq:TradeOff}) between $N$ and $\varepsilon$, several values have been considered for the number of particles $N=1000,\,3162,\,10000,\,31623,\,50000$ and for the regularization parameter $\varepsilon=0.1,\, 0.2,\,0.3,\,0.4,\,0.5 ,\, 0.6$.

\subsection{Simulations results}

We have reported the estimated $L^1$ error (according to~\eqref{eq:MISEApproxP3}) committed by our approximation scheme~\eqref{eq:tildeYuP3} on Figure~\ref{fig:Burgers}, for the Burgers equation~\eqref{eq:Burgers} and on Figure~\ref{fig:KPZ}, for the KPZ equation~\eqref{eq:KPZ}. 
The objective consists in illustrating the tradeoff stated in~\eqref{eq:TradeCond} 
and to evaluate the convergence rate of the error. 
In both cases, one can observe on the left graphs that the error decreases with the number of particles, at a rate $N^{-1/2}$.
 However, when the regularization parameter $\varepsilon$ is big, the largest part of the error is due to $\varepsilon$
 so that the impact of increasing  $N$ is rapidly negligible.

 On  the right-hand side graphs, for fixed $N$, we observe that the error diverges when $\varepsilon$ goes
to zero. As already postulated in Remark \ref{rem:CvgPartic}, the convergence of the error to zero
when $\varepsilon $ goes to zero, holds only letting $N$ goes to infinity according to some relation 
 $N\mapsto \varepsilon(N)$.
The graphs provide empirically the optimal rate $N\mapsto \varepsilon_{opt} (N)$, which 
 corresponds to the value of $\varepsilon$ related to the minimum of the curve indexed by $N$.

We have reported on Figure~\ref{fig:hopt} estimations of these optimal points $(N,\varepsilon_{opt} (N))$
in a  logarithmic scale,
for $N=1000,3162,10000, 31623, 50000$ and drawn a linear interpolation on those points.
The related slopes  are  $-0.21$ (resp. $-0.12$)  for the one dimensional Burgers (resp. the five dimensional KPZ) example. 
 These optimal bandwidths seem to behave accordingly to
 classical kernel density estimation rules, which are of the type $\varepsilon_{opt}\propto \frac{1}{N^{1/(d+4)}}$.
 Indeed $-0.21\approx -1/(d+4)= -1/5$ for the one dimensional
 Burgers example and $-0.12\approx -1/(d+4)=-1/9$ for the five dimensional KPZ example.
 This suggests as already announced in Remark~\ref{rem:CvgPartic} that the tradeoff condition~\eqref{eq:TradeOff} is
 far too rough and that the algorithm behaves better in practice.   
\begin{figure}[!h]
\begin{center}
\subfigure[$L^1$ error as a function of $N$]
{\includegraphics[width=0.49\linewidth]{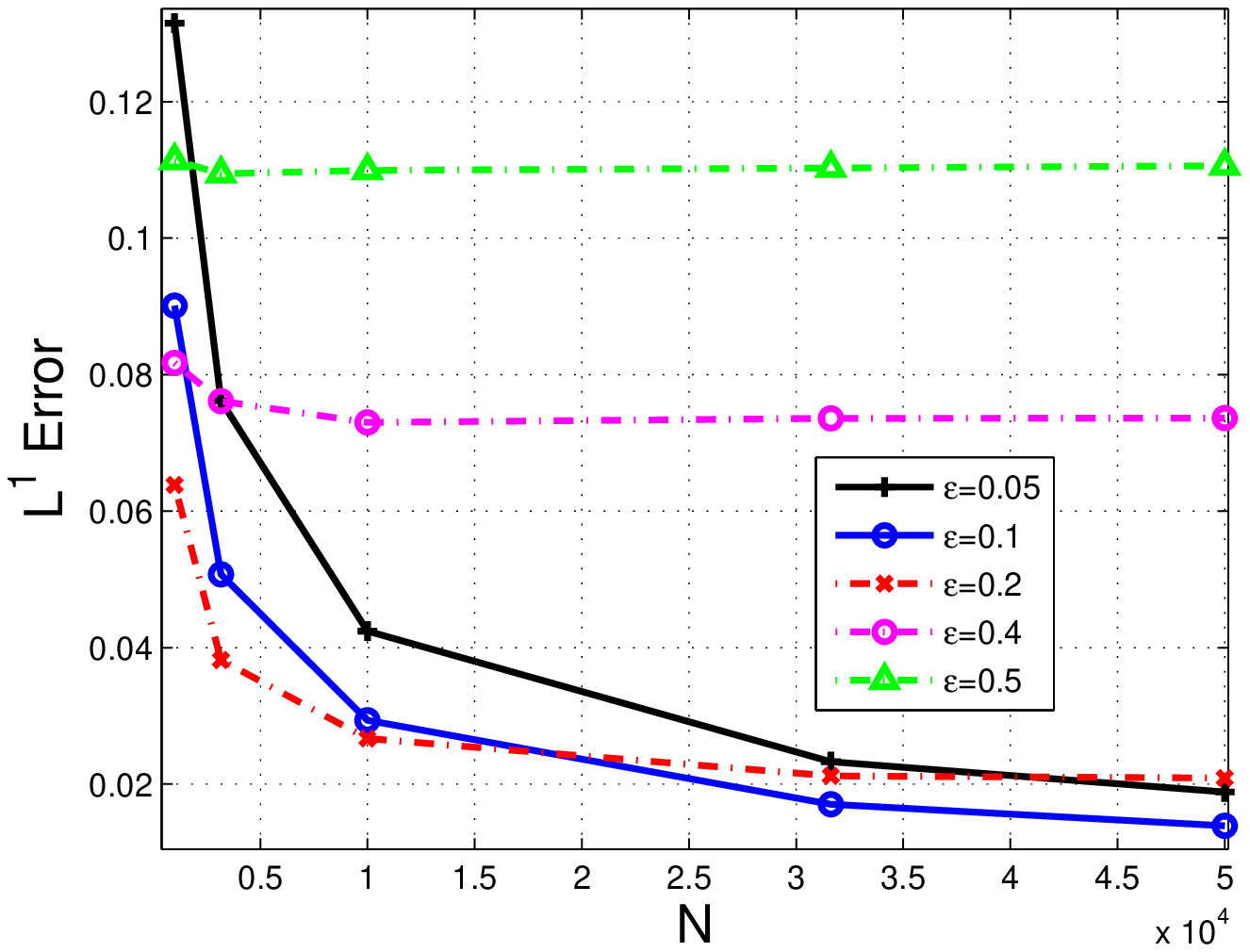}}
\subfigure[$L^1$ error  as a function of $\epsilon$]{\includegraphics[width=0.49\linewidth]{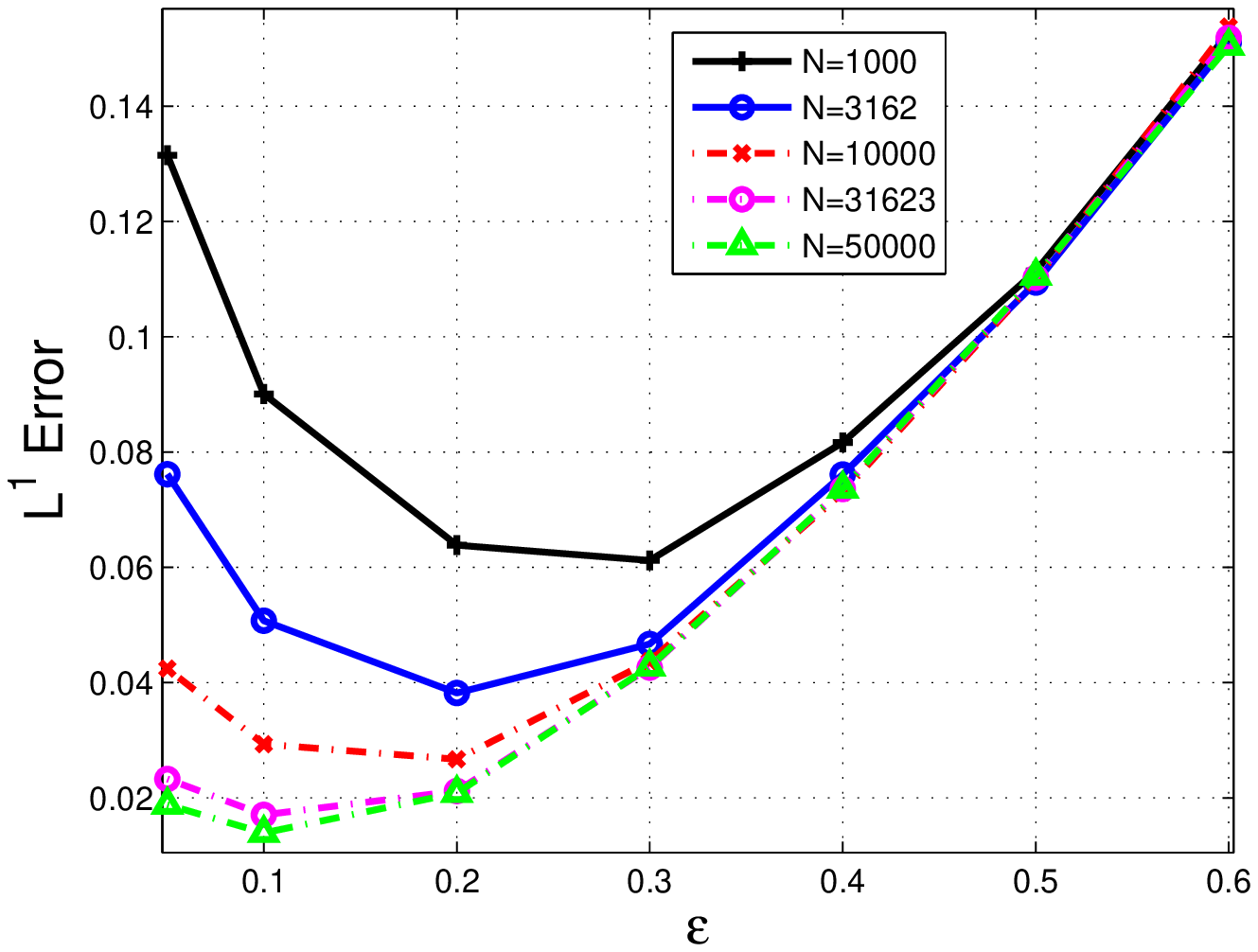}}
\end{center}
\caption{{\small  $L^1$ error as a function of the number of particles, $N$, (on the left graph) and the mollifier window width, $\epsilon$, (on the right graph), for the Burgers equation~\eqref{eq:Burgers}, dimension $d=1$. } }
\label{fig:Burgers}
\end{figure}

\begin{figure}[!h]
\begin{center}
\subfigure[$L^1$ error as a function of $N$]
{\includegraphics[width=0.49\linewidth]{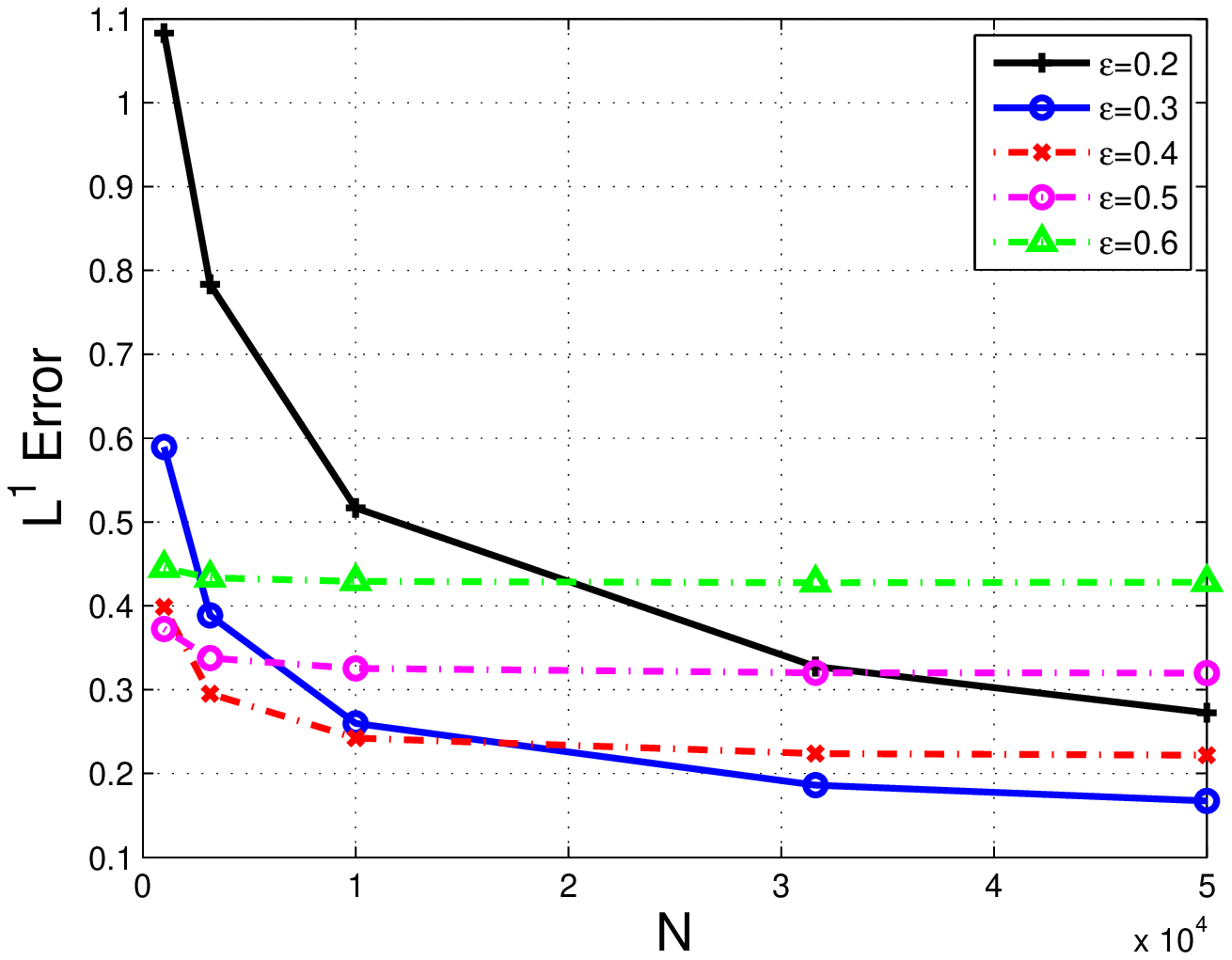}}
\subfigure[$L^1$ error as a function of $\epsilon$]{\includegraphics[width=0.49\linewidth]{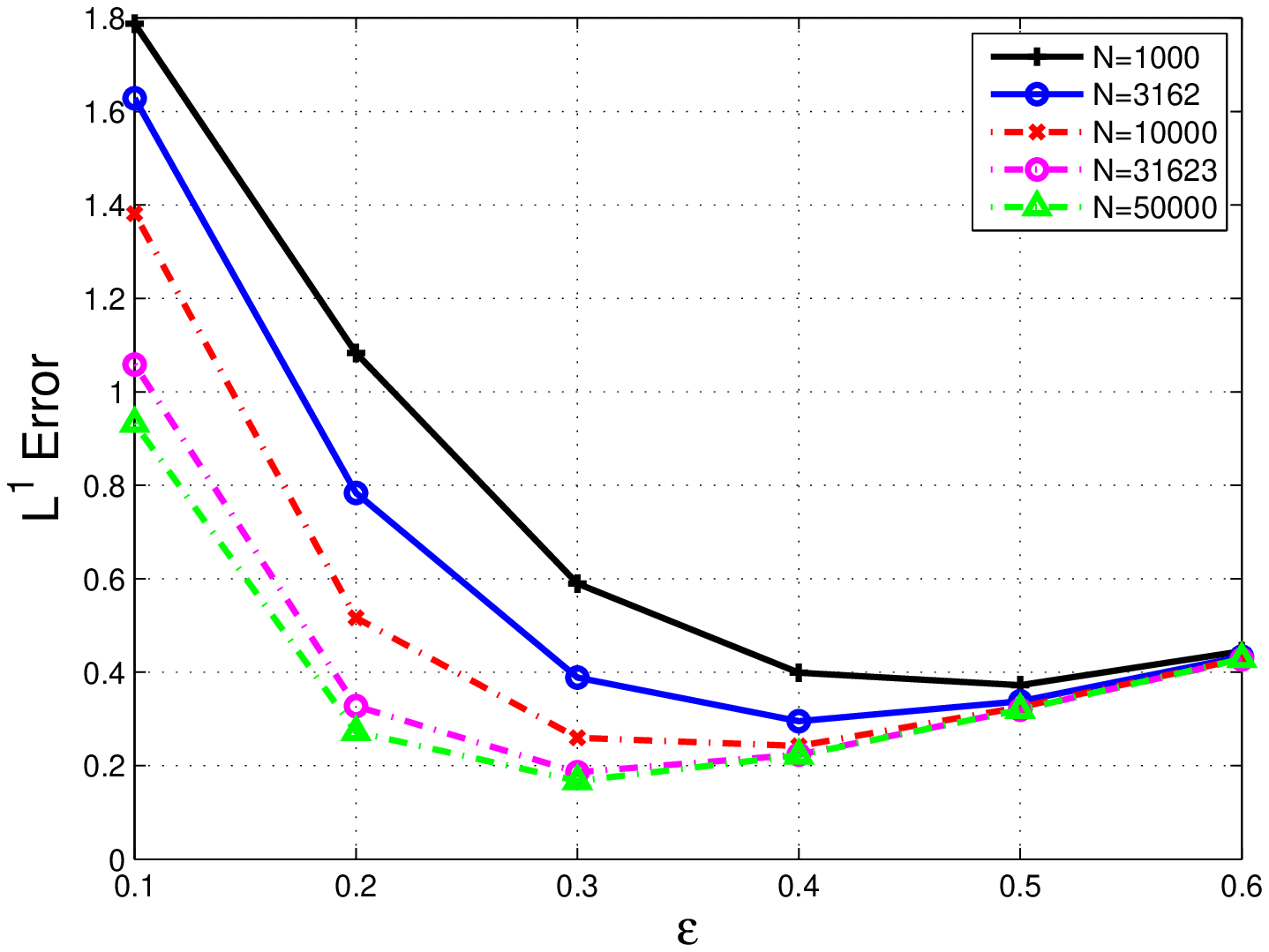}}
\end{center}
\caption{{\small  $L^1$ error as a function of the number of particles, $N$, (on the left graph) and the mollifier window width, $\epsilon$, (on the right graph), for the KPZ equation~\eqref{eq:KPZ}, dimension $d=5$. } }
\label{fig:KPZ}
\end{figure}

\begin{figure}[!h]
\begin{center}
\subfigure[$\varepsilon_{opt}$ as a function of $N$ for Burgers ($d=1$)]
{\includegraphics[width=0.49\linewidth]{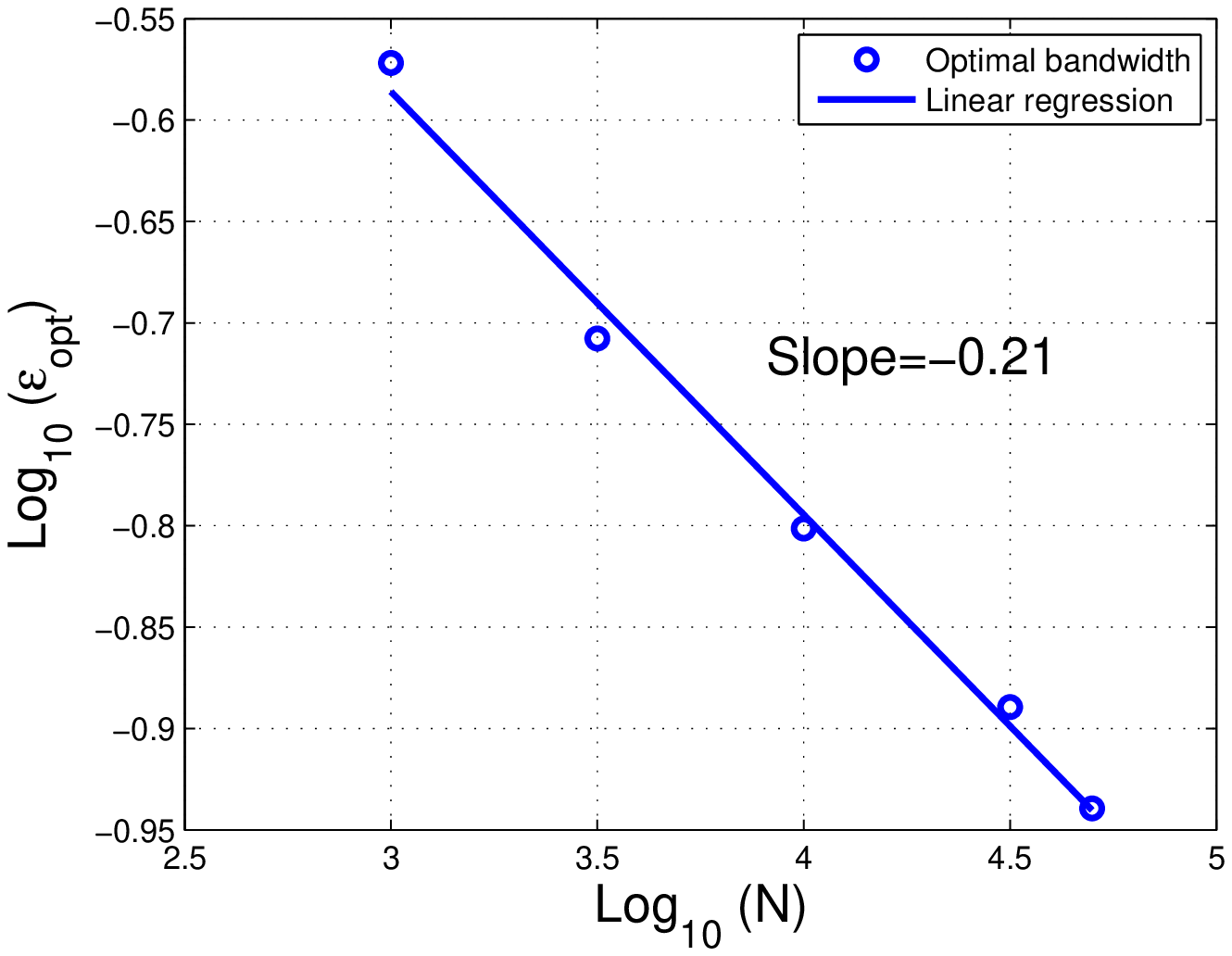}}
\subfigure[$\varepsilon_{opt}$ as a function of $N$ for KPZ ($d=5$)]{\includegraphics[width=0.49\linewidth]{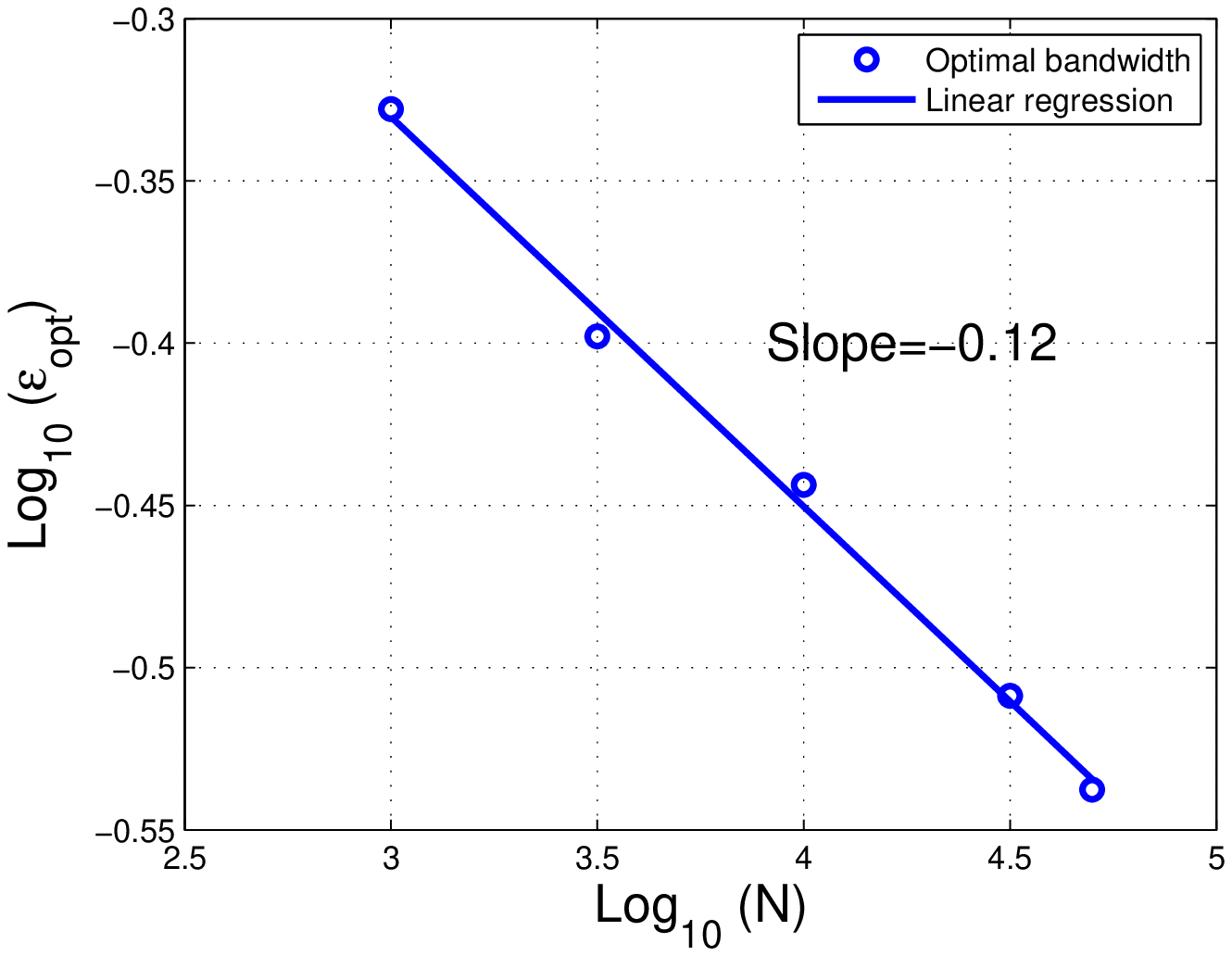}}
\end{center}
\caption{{\small  Optimal bandwidth, $\varepsilon_{opt}$, as a function of the number of particles, for Burgers equation with $d=1$ (left graph) and for the KPZ equation~\eqref{eq:KPZ} with $d=5$ (right graph). } }
\label{fig:hopt}
\end{figure}

\vfill \eject
\section{Appendix}
\label{SAppendixP3}
\setcounter{equation}{0}

\begin{proof}[Proof of Lemma \ref{lem:TimeCvgineq}]
Let us fix $\varepsilon > 0, N \in \N^{\star}$, $t \in [0,T]$. We first recall that for almost all $x \in \R^d$,
\begin{equation}
\label{eq:Defsu}
\bar{u}^{\varepsilon,N}_{t}(x)=\frac{1}{N}\sum_{i=1}^N K_{\varepsilon}(x-\bar \xi^{i}_{t}) \bar{V}_t\big (\bar{\xi}^{i},\bar{u}^{\varepsilon,N}(\bar{\xi}^{i}),\nabla \bar u^{\varepsilon,N}(\bar{\xi}^{i}) \big ) ,
\end{equation}
for which  $\bar V_t$ is given by  \eqref{eq:defVbar}. 
Let us fix $i \in \{1,\cdots,N\}$. 
\begin{itemize}
\item {\itshape Proof of \eqref{eq:Lem67Eq6}.}
We only give details for the proof of the first inequality
since the second one can be established through similar arguments. \\
From the second line equation of \eqref{eq:Defsu}, we have
\begin{eqnarray}
\vert \bar u^{\varepsilon,N}_{r(t)}(x) - \bar u^{\varepsilon,N}_{r(t)}(y) \vert & \leq & \frac{1}{N} \sum_{i=1}^N \big \vert K_{\varepsilon}(x-\bar \xi^i_{r(t)}) - K_{\varepsilon}(y-\bar \xi^i_{r(t)}) \big \vert \bar{V}_{r(t)}\big (\bar{\xi}^{i},\bar{u}^{\varepsilon,N}(\bar{\xi}^{i}),\nabla \bar u^{\varepsilon,N}(\bar{\xi}^{i}) \big) \nonumber \\
& \leq & \frac{e^{M_{\Lambda}T}}{N\varepsilon^{d+1}} \sum_{i=1}^N L_K \vert x - y \vert \nonumber \\
& \leq & \frac{e^{M_{\Lambda}T}L_K}{\varepsilon^{d+1}} \vert x - y \vert \ ,
\end{eqnarray}
where for the second step above, we have used the fact that $K$ is 
in particular Lipschitz. The same arguments lead also to
\begin{eqnarray}
\vert \nabla \bar u^{\varepsilon,N}_{r(t)}(x) - \nabla \bar u^{\varepsilon,N}_{r(t)}(y) \vert & \leq & \frac{e^{M_{\Lambda}T}L_{\nabla K}}{\varepsilon^{d+2}} \vert x - y \vert \ ,
\end{eqnarray}
which ends the proof of \eqref{eq:Lem67Eq6}.
\item {\itshape Proof of \eqref{eq:Lem67Eq8}.}
From
\begin{equation}
\label{eq:defu}
\bar u^{\varepsilon,N}_{t}(x) = {\displaystyle \frac{1}{N}\sum_{i=1}^N  K_{\varepsilon}(x-\bar \xi^{i}_t) \bar V_t\big (\bar \xi^{i},\bar u^{\varepsilon,N}(\bar \xi^{i}),\nabla \bar u^{\varepsilon,N}(\bar \xi^{i}) \big ) } \ , \quad x \in \R^d \ ,
\end{equation}
we deduce, for almost all $x \in \R^d$,
\begin{eqnarray}
\vert \bar u^{\varepsilon,N}_t(x) - \bar u^{\varepsilon,N}_{r(t)}(x) \vert & \leq & \frac{e^{M_{\Lambda}T}}{N} \sum_{i=1}^N \Big \vert K_{\varepsilon}(x-\bar \xi^{i}_t) - K_{\varepsilon}(x- \bar \xi^{i}_{r(t)}) \Big \vert  \nonumber \\
&& \; + \; \frac{\Vert K \Vert_{\infty}}{N \varepsilon^d} \sum_{i=1}^N \big \vert \bar V_t\big ( \bar \xi^{i},\bar u^{\varepsilon,N}(\bar \xi^{i}),\nabla \bar u^{\varepsilon,N}(\bar \xi^{i}) \big ) - \bar V_{r(t)}\big (\bar \xi^{i},\bar u^{\varepsilon,N}(\bar \xi^{i}),\nabla \bar u^{\varepsilon,N}(\bar \xi^{i}) \big ) \big \vert \ . \nonumber \\
\end{eqnarray}
Since $K$ is Lipschitz with  related constant  $L_K = \Vert \nabla K \Vert_\infty$,
for almost all $x \in \R^d$, we obtain
\begin{eqnarray}
\label{eq:638}
\vert \bar u^{\varepsilon,N}_t(x) - \bar u^{\varepsilon,N}_{r(t)}(x) \vert & \leq & \frac{L_K e^{M_{\Lambda}T}}{N\varepsilon^{d+1}} \sum_{i=1}^N  \vert \bar \xi^i_t - \bar \xi^i_{r(t)} \vert \nonumber \\
&& \; + \; \frac{L_{\Lambda}e^{M_{\Lambda}T} \Vert K \Vert_{\infty}}{N \varepsilon^d} \sum_{i=1}^N \int_{r(t)}^t \Lambda(r(s),\bar \xi^i_{r(s)}, \bar u^{\varepsilon,N}_{r(s)}(\bar \xi^i_{r(s)}), \nabla \bar u^{\varepsilon,N}_{r(s)}(\bar \xi^i_{r(s)})) ds \ , \nonumber \\
\end{eqnarray}
where the second term in \eqref{eq:638} comes from inequality \eqref{eq:LipV}. Since $\Lambda$ is bounded, by taking the supremum w.r.t. $x$ and the expectation in both sides of inequality above we have
\begin{eqnarray}
\E \Big[ \Vert \bar u^{\varepsilon,N}_t - \bar u^{\varepsilon,N}_{r(t)} \Vert_{\infty} \Big] & \leq & \frac{L_K e^{M_{\Lambda}T}}{N\varepsilon^{d+1}} \sum_{i=1}^N \E \Big[ \vert \bar \xi^i_t - \bar \xi^i_{r(t)} \vert \Big] \nonumber \\
&& \; + \; \frac{L_{\Lambda}e^{M_{\Lambda}T} \Vert K \Vert_{\infty}}{\varepsilon^d} M_{\Lambda} \delta t 
 \leq  \frac{C \sqrt{\delta t}}{\varepsilon^{d+1}} \ ,
\end{eqnarray}
where we have used the fact that $\E \Big[ \vert \bar \xi^i_s - \bar \xi^i_{r(s)} \vert^2 \Big] \leq C \delta t$, since $\Phi, g$ are bounded.
\\
The bound of $ \E \Big[ \Vert \nabla \bar u^{\varepsilon,N}_t - \nabla \bar u^{\varepsilon,N}_{r(t)} \Vert_{\infty} \Big] $ is obtained by proceeding exactly in with the same way as above, starting with 
\begin{equation}
\frac{\partial \bar u^{\varepsilon,N}_t}{\partial x_{\ell}}(\cdot) = {\displaystyle \frac{1}{N \varepsilon}\sum_{i=1}^N  \frac{\partial K_{\varepsilon}}{\partial x_{\ell}}( \cdot-\bar \xi^{i}_t) \bar V_t\big (\bar \xi^{i}, \bar u^{\varepsilon,N}(\bar \xi^{i}),\nabla \bar u^{\varepsilon,N}(\bar \xi^{i}) \big ) } \ , l = 1, \cdots, d \ ,
\end{equation}
instead of \eqref{eq:defu}, where $x_\ell$ denotes the 
$\ell$-th coordinate of $x \in \R^d$. It follows then
\begin{eqnarray}
\E \Big[ \Vert \nabla \bar u^{\varepsilon,N}_t - \bar \nabla u^{\varepsilon,N}_{r(t)} \Vert_{\infty} \Big] & \leq & \frac{C \sqrt{\delta t}}{\varepsilon^{d+2}} \ .
\end{eqnarray}
\end{itemize}
\end{proof}


\addcontentsline{toc}{section}{Bibliography}
\bibliographystyle{plain}
\bibliography{NonConservativePDE}
\end{document}